\documentclass[amssymb,amsfonts,refcheck,12pt,verbatim,righttag]{amsart}

\usepackage{color}
\usepackage{amssymb}
\usepackage{graphics}
\usepackage{tikz}
\usepackage{multicol}
\usepackage{subfigure}
\usepackage{bm}
\usepackage{algorithm}
\usepackage{algpseudocode}

\def\colon{{:}\;}

 \numberwithin{equation}{section} % \renewcommand{\rm}{\normalshape} %
\newcommand{\beq}{\begin{equation}}
\newcommand{\eeq}{\end{equation}}

\newcommand{\ben}{\begin{eqnarray}}
\newcommand{\een}{\end{eqnarray}}

\newcommand{\bet}{\begin{eqnarray*}}
\newcommand{\eet}{\end{eqnarray*}}

\newtheorem{thm}{Theorem}[section]
\newtheorem{lem}[thm]{Lemma}

\newtheorem{cor}[thm]{Corollary}

\newtheorem{rem}[thm]{Remark}

\usepackage[colorlinks,citecolor=red,pagebackref,hypertexnames=false]{hyperref}

\newcommand{\R}{\mathbb{R}}

\newcommand{\Q}{\mathbb{Q}}
\newcommand{\N}{\mathbb{N}}
\newcommand{\Z}{\mathbb{Z}}

\theoremstyle{plain}

\begin{document}
\baselineskip 16pt

\title{Dimension drop for intersections of Cantor sets}

\author{Lai Jiang}
\address[Lai Jiang]{School of Fundamental Physics and Mathematical Sciences, Hangzhou Institute for Advanced Study, UCAS, Hangzhou 310024, China}
\email{jianglai@ucas.ac.cn}

\author{Bing Li}
\address[Bing Li]{School of Mathematics, South China University of Technology, Guangzhou 510641, China}
\email{scbingli@scut.edu.cn}

\author{Ruofan Li}
\address[Ruofan Li]{Department of Mathematics, Jinan University, Guangzhou 510632, China}
\email{liruofan@jnu.edu.cn}

\author{Yufeng Wu*}
\thanks{*Corresponding author}
\address[Yufeng Wu]{School of Mathematics and Statistics\\HNP-LAMA\\Central South University\\ Changsha 410083, China}
\email{yufengwu.wu@csu.edu.cn}

\keywords{self-similar sets, missing-digit sets, fractal intersections, Minkowski dimension, Hausdorff dimension}
\thanks{2010 \textit{Mathematics Subject Classification}: 28A80}

\begin{abstract}
Let $E\subset \R$ be a self-similar set generated by a homogeneous iterated function system $\Phi$ with contraction ratio $\rho\in (0,1)$. Assume that $\Phi$ satisfies the open set condition and $\dim_{\rm H}E<1$.   Let $f$ be a $C^1$-diffeomorphism on $\R$. We prove that if $\log|f'(x)|/\log\rho\not\in\Q$ for every $x\in E\cap f^{-1}(E)$, then the upper Minkowski dimension of $f(E)\cap E$
is strictly less than the Hausdorff dimension of $E$. We also establish a quantitative dimension drop result when $E$ is a missing-digit set and  $f$ is an affine map with rational slope satisfying a certain arithmetic condition.     
Based on these results and a result of Shmerkin~\cite{Shmerkin19}, we obtain characterizations of $\gamma$ in various contexts such that $\overline{\dim}_{\rm M}\left((\gamma E+\alpha)\cap E\right)<\dim_{\rm H}E$ for every $\alpha\in\R$.
\end{abstract}

\maketitle

\section{Introduction}

The study of intersections of fractal sets is an important topic in fractal geometry,  closely related to other topics such as slices, affine embeddings, and Lipschitz equivalence. Given a self-similar set $E\subset \R$ and a map $f: \R\to \R$, a general question is under what assumptions on $E$ and $f$ does the following  dimension drop property hold:  the Hausdorff dimension of the intersection $f(E)\cap E$ is strictly less than that of $E$. It is natural to expect this holds under some arithmetic non-resonance conditions. 
In this paper, we study this question from both qualitative and quantitative aspects.

We first recall some terminologies about self-similar sets. We call a finite collection $\Phi=\{\phi_i\}_{i=1}^{m}$ of contracting similitudes on $\R^d$ a (self-similar) {\em iterated function system} (IFS). According to a classical result of Hutchinson \cite{hutchinson}, given an IFS $\Phi=\{\phi_i\}_{i=1}^{m}$ on $\R^d$, there is a unique non-empty compact set $E\subset\R^d$, called the {\em self-similar set} generated by $\Phi$, such that
\begin{equation}\label{deSSS}
E=\bigcup_{i=1}^{m}\phi_i(E).
\end{equation}
To avoid triviality, we always assume $m\geq 2$. 
If all $\phi_i$ have the same linear part, we call $\Phi$ a {\em homogeneous} IFS and $E$ a homogeneous self-similar set. We say that $\Phi$ (or $E$) satisfies the {\em open set condition} (OSC) if there exists a non-empty bounded open set $V\subset \R^d$ such that $\phi_i(V)$ ($1\leq i\leq m$) are pairwise disjoint and contained in $V$.
Under the OSC,  the Hausdorff dimension of $E$ is given by the {\em similarity dimension} of $\Phi$, that is, the positive number $s$ satisfying $\sum_{i=1}^{m}\rho_i^s=1$, where for $1\leq i\leq m$,  $\rho_i\in (0,1)$ is the contraction ratio of $\phi_i$. 
 We say that $\Phi$ satisfies the {\em strong separation condition} (SSC) if \eqref{deSSS} is a disjoint union. It is well-known that the SSC implies the OSC. 
For proofs of the above facts and  more information on self-similar sets, see~\cite{Falconer14}.

In this paper, we are concerned with the intersection $f(E)\cap E$,
where $E\subset \R$ is a  self-similar set generated by a homogeneous IFS $\Phi$ satisfying the OSC and $f:\R\to \R$ is a $C^1$-diffeomorphism or an affine map. 

Much work has been devoted to the size of $f(E)\cap E$ when $f$ is affine. 
Let $\gamma,\alpha\in \R$ with $\gamma\neq 0$. The measure of the  intersection
\begin{equation}\label{eqKbDint}
(\gamma E+\alpha)\cap E
\end{equation}
has been investigated in \cite{EKM10S}. Let $\mu$ be a self-similar measure on $E$. According to \cite[Theorem 4.5]{EKM10S}, if $E$ satisfies SSC, then $(\gamma E+\alpha)\cap E$ has positive $\mu$-measure only when $\gamma E+\alpha$ contains an elementary piece of $E$ (i.e., a set of the form $\phi_{i_1}\circ\cdots\circ\phi_{i_k}(E)$ for some word $i_1\ldots i_k$ on $\{1,\ldots,m\}$). Then by the Logarithmic Commensurability Theorem of Feng and Wang~\cite{FengWang09O} (see Theorem~\ref{thmFW}), this happens only when  
\[\frac{\log|\gamma|}{\log \rho}\in \Q,\]
where $\rho\in (0,1)$ is the common contraction ratio of the elements in $\Phi$. 
Hence, if $E$ satisfies the SSC, then for any $\gamma\in \R\setminus\{0\}$ with $\log |\gamma|/\log \rho\not\in \Q$, we have the following measure drop property:
\begin{equation}\label{eqmu0}
	\mu((\gamma E+\alpha)\cap E)=0, \quad \forall \alpha\in \R.
\end{equation}
A natural question is: under the same condition on $\gamma$, do  we have a stronger dimension drop property, asserting that the Hausdorff dimension of  $(\gamma E+\alpha)\cap E$ is  strictly less than that of $E$? Note that this does not follow from \eqref{eqmu0}.

In this paper, our first result answers the above question in the affirmative. Indeed, we prove a much more general result for $C^1$-intersections of $E$ when $E$ satisfies the OSC and has Hausdorff dimension less than $1$,  which states for any $C^1$-diffeomorphism $f$ on $\R$ whose derivative satisfies a natural arithmetic condition, the upper Minkowski dimension of $f(E)\cap E$ is strictly less than the Hausdorff dimension of $E$. 

To state our  results, we need to introduce some more notation.  For any non-empty bounded set $A\subset \R$, the upper Minkowski dimension of $A$, denoted by $\overline{\dim}_{\rm M}A$, is defined by 
\[\overline{\dim}_{\rm M}A=\limsup_{\delta\to0}\frac{\log N_{\delta}(A)}{-\log\delta},\]
where for $\delta>0$, $N_{\delta}(A)$ denotes the smallest number of closed intervals of length $\delta$ needed to cover $A$. Let $\dim_{\rm H}A$ denote the Hausdorff dimension of $A$. We adopt the convention that $\overline{\dim}_{\rm M}A=\dim_{\rm H}A=0$ when $A$ is the empty set.  For more information on the Minkowski and Hausdorff dimensions, we refer the reader to~\cite{Falconer14}.

Our first main result in this paper is the following. 

\begin{thm}\label{thmC1drop}
	Let \(E\subset\R\) be the self-similar set  generated by a homogeneous IFS
	$\Phi=\{\phi_i(x)=\rho x+a_i\}_{i=1}^m$
	satisfying the OSC, where $0<\rho<1$ and $a_i\in\R$, $i=1,\ldots, m$. Assume $\dim_{\rm H} E<1$.
	Let \(f\) be a \(C^1\)-diffeomorphism on $\R$. Suppose that
	\[\frac{\log |f'(x)|}{\log \rho}\notin\Q
	\qquad\text{ for every }x\in E\cap f^{-1}(E).
	\]
	Then
	\[\overline{\dim}_{\rm M}(f(E)\cap E)<\dim_{\rm H}E.
	\]
\end{thm}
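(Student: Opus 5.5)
We argue by compactness and a zooming (blow-up) scheme. Write $s=\dim_{\rm H}E$, let $\mu$ be the natural self-similar measure on $E$ (uniform weights $1/m$), and let $\mathcal{I}_n$ denote the level-$n$ cylinders $\phi_{i_1\cdots i_n}(E)$, each of diameter comparable to $\rho^n$.

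\emph{Step 1: reduction to a counting estimate.} It suffices to find $\varepsilon>0$, an integer $k$ and a constant $C$ such that for every $n$ and every level-$n$ cylinder $I$, the number of level-$(n+k)$ subcylinders $J\subset I$ with $f(E)\cap J\neq\emptyset$ is at most $m^{k(1-\varepsilon)}$. We allow finitely many exceptional scales and enlarge $J$ by a constant factor. Iterating this bound gives $N_{\rho^{n}}(f(E)\cap E)\lesssim m^{n(1-\varepsilon)}$, and hence $\overline{\dim}_{\rm M}(f(E)\cap E)\le (1-\varepsilon)s<s$.

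\emph{Step 2: blow-up and limiting configurations.} Fix a level-$n$ cylinder $I=\phi_w(E)$ meeting $f(E)$, and a point $x\in E$ with $f(x)\in I$. By the OSC we have bounded overlap, so $f(E)\cap I$ is covered, up to boundedly many pieces, by images under $f$ of cylinders $\phi_u(E)$ with $|u|$ chosen so that $|f'(x)|\rho^{|u|}\asymp\rho^n$. Rescaling by $\phi_w^{-1}$ and using the $C^1$ property, the relevant part of $f(E)$ inside $I$ becomes $C^0$-close, as $n\to\infty$, to an affine copy $\lambda E+t$. Here $\lambda=\pm f'(x)\rho^{|u|-n}$ lies in a compact set of scales bounded away from $0$ and $\infty$, and $t$ is bounded.

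The key claim is a \emph{uniform} estimate: for every such $\lambda$ with $\log|\lambda|/\log\rho\notin\Q$, and every $t$, the set $\phi_w^{-1}(I)\cap(\lambda E+t)$, viewed at depth $k$, meets at most $m^{k(1-2\varepsilon)}$ cylinders of level $k$. Our approach is to argue by contradiction: suppose instead that for some sequence $n_j$, with $\varepsilon\to0$ and $k\to\infty$, the count is near-full, i.e.\ $\ge m^{k(1-o(1))}$. We then pass to limits in the Hausdorff metric of the rescaled configurations. Since $E\cap f^{-1}(E)$ is compact and the arithmetic condition is assumed at every such point, any limit point $x_\infty$ still satisfies $\log|f'(x_\infty)|/\log\rho\notin\Q$. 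The scale $\lambda_\infty$ is then $f'(x_\infty)$ times an integer power of $\rho$, so its logarithm is also irrational relative to $\log\rho$. A near-full count at all depths should force, in the limit, an elementary piece of $E$ to lie inside $\lambda_\infty E+t_\infty$ (or a piece of $\lambda_\infty E+t_\infty$ inside $E$). To obtain this we use the fact that $\dim_{\rm H}E<1$ to get porosity, and a density/Lebesgue-point argument for $\mu$ to upgrade ``full proportion'' to actual containment. The Logarithmic Commensurability Theorem of Feng and Wang (Theorem~\ref{thmFW}) then says such a containment forces $\log|\lambda_\infty|/\log\rho\in\Q$, which is a contradiction.

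\emph{Main obstacle.} The hardest step is converting a near-maximal covering number into a genuine containment $\phi_v(E)\subset\lambda E+t$, so that Theorem~\ref{thmFW} applies. Counting only gives that most subcylinders are hit, and hitting them only approximately.

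The plan is to choose $k$ first, depending on a quantitative ``gap'' property of $E$: because $s<1$, every interval at scale $\rho^k$ contains a gap of $E$ of relative size bounded below. We would then show, by compactness over the compact parameter set of $(\lambda,t)$ with $\lambda$ in a compact range, that the function $(\lambda,t)\mapsto\#\{\text{level-}k\text{ cylinders meeting }\lambda E+t\}$ falls strictly below $m^k$ unless exact self-similar alignment occurs. Exact alignment at all levels is precisely what Theorem~\ref{thmFW} excludes.

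A delicate point is the upper semicontinuity of this count: it holds for closed $1/2$-neighborhoods, which is why Step 1 uses enlarged cylinders. A second delicate point is that exceptional $\lambda$ with rational log-ratio may be approached arbitrarily closely. This is handled because the scales $\lambda$ occurring are confined to $f'(E\cap f^{-1}(E))\rho^{\Z}$ intersected with a compact range, which is a compact set avoiding the rational ones.
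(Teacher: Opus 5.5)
Your Step~1 is sound: it is the contrapositive of the paper's counting step. You also handle the scales correctly, since the limiting slopes lie in $f'(E\cap f^{-1}(E))\rho^{\Z}$ intersected with a compact range, a compact set of admissible slopes, and Theorem~\ref{thmFW} is the right endgame. There is, however, a genuine gap in Step~2, exactly at the point you flag as the main obstacle, and it has two parts.

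\textbf{Near-full versus full counts.} You negate Step~1 only in the weak form ``the count is $\ge m^{k(1-o(1))}$''. That is merely a dimension statement: it allows a vanishing proportion of the children to be hit. No porosity or $\mu$-density argument can turn it into containment of an elementary piece. What you should use instead is that the count at relative depth $k$ is an integer at most $m^k$. If you take $\varepsilon$ so small that $m^{k(1-\varepsilon)}>m^k-1$, then failure of Step~1 gives arbitrarily deep cylinders \emph{all} of whose $m^k$ children meet $f(E)$. That is a closed condition, so it survives Hausdorff limits of the blow-ups.

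\textbf{One copy versus a union.} The rescaled picture of $f(E)$ inside $I$ is not a single affine copy $\lambda E+t$. It is a union of up to $M_0$ copies $g_i(E)$, with different translations and slopes $\rho^{p}f'(x_i)$ at possibly different points $x_i$. Compactness over single admissible copies only tells you that each copy misses some child at a fixed depth. That says nothing about the union. A per-copy exponential bound $m^{k(1-2\varepsilon)}$ would absorb the factor $M_0$, but your sketch does not produce it, and iterating inside one copy regenerates unions anyway.

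\textbf{The missing step is Baire category.} Once every child at every depth is hit, take the blow-up limit and then let the depth tend to infinity. This gives affine maps with $E\subset\bigcup_{i\le M_0}g_i(E)$. One of the finitely many closed sets $E\cap g_i(E)$ then has nonempty interior relative to $E$, so it contains some elementary piece $\phi_J(E)$. The affine map $\psi=g_i^{-1}\circ\phi_J$ then satisfies $\psi(E)\subset E$ and $|\psi'|\in\rho^{\Z}|f'(x_0)|^{\pm1}$ for some $x_0\in E\cap f^{-1}(E)$. Theorem~\ref{thmFW} now gives the contradiction. The hypothesis $\dim_{\rm H}E<1$ enters only through Theorem~\ref{thmFW}; no porosity or gap property is needed.
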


Theorem~\ref{thmC1drop} is closely related to a result of Feng, Huang, and Rao~\cite{FWR14A} on $C^1$-embeddings of self-similar sets.  Let $E,F\subset \R^d$ be self-similar sets generated by IFSs $\Phi$ and $\Psi$, respectively. Assume that $E$ satisfies the OSC and  $\dim_{\rm H}F$ equals the similarity dimension of $\Psi$. It was proved in~\cite[Theorem 1.1]{FWR14A} that $F$ can be $C^1$-embedded into $E$ if and only if $F$ can be affinely embedded into $E$. Furthermore, if $F$ cannot be affinely embedded into $E$, then 
\[\dim_{\rm H}\left(E\cap f(F)\right)<\dim_{\rm H}F\]
for any $C^1$-diffeomorphism on $\R^d$. Clearly, the above result of Feng, Huang, and Rao does not apply to the case $E=F$, as in this case $F$ certainly can be affinely embedded into $E$ by the identity map. However, Theorem~\ref{thmC1drop} says that for the single-set case, we do have dimension drop for $C^1$-intersections whenever the derivative of $f$ satisfies a natural arithmetic condition. We prove Theorem~\ref{thmC1drop} by combining an adaptation of the localization and recalling argument in \cite{FWR14A} and the Logarithmic Commensurability Theorem of Feng and Wang~\cite{FengWang09O}. The new point is to
keep track of the slopes of the affine maps produced in the recalling process; see Lemma~\ref{lemBlowup}. 

We also note that Hochman~\cite{Hochman18S} studied a related but different problem. Among other results, he proved that if $X\subset [0,1]$ is a $\times a$-invariant set satisfying certain assumptions and $f$ is a $C^1$-diffeomorphism on $\R$ such that $f(X)\subset X$, then $\log|f'(x)|/\log a\in\Q$ for all $x\in X$. Here $a\geq 2$ is an integer and $X$ is $\times a$-invariant means $ax\pmod1\in X$ whenever $x\in X$.  Hochman's result is different from Theorem~\ref{thmC1drop}, as the latter concerns dimension drop for $C^1$-intersections rather than $C^1$-embeddings of self-similar sets. Certainly Theorem~\ref{thmC1drop} implies that if $f(E)\subset E$ then $\log|f'(x)|/\log \rho\in\Q$ for at least one point $x\in E\cap f^{-1}(E)$.

When applied to affine maps, Theorem~\ref{thmC1drop} has the following consequence,  which characterizes for a class of self-similar sets $E$, those positive rational slopes $\gamma$ such that dimension drop occurs for the intersection $(\gamma E+\alpha)\cap E$ for every $\alpha\in\R$.
\begin{cor}\label{CorEGamma}
	Let $E$ be as in Theorem~\ref{thmC1drop} and assume in addition that $\rho^{-1}$ is a prime number.  Let $\gamma\in \Q$ with $\gamma>0$. Then 
\begin{equation}\label{eqgEaEless}
\overline{\dim}_{\rm M}\left((\gamma E+\alpha)\cap E\right)<\dim_{\rm H}E
\end{equation}
for every $\alpha\in \R$ if and only if $\gamma\not\in \{\rho^{k}: k\in\Z\}$.
\end{cor}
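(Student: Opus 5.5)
Two directions. Write $p=\rho^{-1}$, a prime, and apply Theorem~\ref{thmC1drop} to the affine diffeomorphism $f(x)=\gamma x+\alpha$. Its derivative is constant, $|f'|\equiv\gamma$, so the hypothesis of Theorem~\ref{thmC1drop} reduces to the single condition $\log\gamma/\log\rho\notin\Q$. This condition does not depend on $\alpha$ or on where $E\cap f^{-1}(E)$ lies; if that set is empty, the intersection is empty and \eqref{eqgEaEless} is trivial because $\dim_{\rm H}E>0$.

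\emph{Sufficiency.} Let $\gamma\in\Q_{>0}$ with $\gamma\notin\{p^{-k}:k\in\Z\}$. I claim $\log\gamma/\log p\notin\Q$. Suppose instead $\log\gamma/\log p=r/s$ with $s\ge1$. Then $\gamma^s=p^r$. Write $\gamma=\prod_q q^{v_q}$ by unique factorization in $\Q_{>0}$. Comparing exponents in $\gamma^s=p^r$ gives $s v_q=0$ for every prime $q\neq p$, so $v_q=0$. Hence $\gamma=p^{v_p}=\rho^{-v_p}$, a contradiction. So $\log\gamma/\log\rho=-\log\gamma/\log p\notin\Q$, and Theorem~\ref{thmC1drop} gives \eqref{eqgEaEless} for every $\alpha$.

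\emph{Necessity.} Suppose $\gamma=\rho^k$ with $k\in\Z$. I need one $\alpha$ for which \eqref{eqgEaEless} fails.

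If $k\ge0$, pick any word $I$ of length $k$ and set $\alpha=\phi_I(0)$. Then $\gamma E+\alpha=\phi_I(E)\subset E$, so the intersection is $\phi_I(E)$. This set has the same dimension as $E$, and for self-similar sets under the OSC $\overline{\dim}_{\rm M}E=\dim_{\rm H}E$, so \eqref{eqgEaEless} fails.

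If $k<0$, take $\alpha=-\rho^{k}\phi_I(0)$ with $|I|=-k$. Then $\gamma E+\alpha=\phi_I^{-1}(E)\supset \phi_I^{-1}(\phi_I(E))=E$, so the intersection is all of $E$ and \eqref{eqgEaEless} fails.

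The argument is essentially routine. The only points needing care are the arithmetic step that uses primality of $\rho^{-1}$ and the standard equality of Minkowski and Hausdorff dimension under the OSC, which justifies equality in the necessity direction.
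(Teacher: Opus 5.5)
Your proof is correct and follows the paper's approach. For sufficiency you apply Theorem~\ref{thmC1drop} to $f(x)=\gamma x+\alpha$ and supply the unique-factorization argument that the paper leaves as an elementary fact. For necessity you exhibit a translate $\gamma E+\alpha$ equal to $\phi_I(E)\subset E$ or to $\phi_I^{-1}(E)\supset E$; this is the paper's identity $p^kE=E+A_k$ with $\alpha\in -A_k$, phrased through the IFS maps. In the case $k\ge 0$ you do not need the OSC equality $\overline{\dim}_{\rm M}E=\dim_{\rm H}E$, since $\overline{\dim}_{\rm M}\phi_I(E)\ge\dim_{\rm H}\phi_I(E)=\dim_{\rm H}E$ holds for any bounded set.
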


\begin{rem}\label{remps}
{\rm (i)} The ``if'' part  may fail without assuming $\rho^{-1}$ is prime. For instance, let  $\rho^{-1}=9$ and let $E$ be generated by the IFS $\{\rho (x+i):i=0,2,6,8\}$. Take $\gamma=3$. Then $\gamma\not\in \{\rho^k: k\in\Z\}$. However, note that $E$ coincides with the middle-third Cantor set $C$ and  $3C\cap C=C$. Hence the ``if'' part fails for this example.

{\rm (ii)} We say that $E$ is  symmetric if $E=c-E$ for some $c\in\R$. Let $E$ and $\rho$ be as in Corollary~\ref{CorEGamma} and assume that $E$ is symmetric. Then a slight modification of the proof of  Corollary~\ref{CorEGamma} yields the following: Let $\gamma\in \Q$ with $\gamma<0$. Then \eqref{eqgEaEless} holds for every $\alpha\in\R$ if and only if $\gamma\not\in\{-\rho^k: k\in\Z\}$. 
\end{rem}

Theorem~\ref{thmC1drop} is purely qualitative, since the proof is based on contradiction. 
We would like to also explore quantitative dimension drop results. For this purpose, we restrict our attention to affine intersections of missing-digit sets, a subclass of homogeneous self-similar sets on $\R$ satisfying the OSC. 

Let $b\geq 3$ be an integer and let $D$  be a subset of $\{0,1,\ldots, b-1\}$ with cardinality  $2\leq \#D<b$. The \textit{missing-digit set} $K_{b,D}$ with base $b$ and digit set $D$ is  defined by
\[K_{b,D}=\left\{\sum_{j=1}^{\infty}\frac{\varepsilon_j}{b^j}: \varepsilon_j\in D \text{ for all } j\geq 1\right\}.\]
Equivalently, $K_{b,D}$ is the self-similar set satisfying 
\begin{equation}\label{eqKbDiden}
	K_{b,D}=\bigcup_{d\in D}\left(\frac{1}{b}K_{b,D}+\frac{d}{b}\right).
\end{equation}

Shmerkin~\cite[Corollary 6.4]{Shmerkin19} proved that 
if $\gamma$ is irrational, then 
\begin{equation}\label{eqShmerkin}
\overline{\dim}_{\rm M}\left((\gamma K_{b,D}+\alpha)\cap K_{b,D}\right)\leq \max\{2\dim_{\rm H}K_{b,D}-1, 0\}
\end{equation}
for any $\alpha\in \R$. Since $\dim_{\rm H}K_{b,D}=\log(\#D)/\log b<1$,   
this shows that, when $\gamma$ is irrational, we  have the dimension drop property:
\begin{equation}\label{eqdimdropinq}
\overline{\dim}_{\rm M}\left((\gamma K_{b,D}+\alpha)\cap K_{b,D}\right)< \dim_{\rm H}K_{b,D}, \quad \forall \alpha\in \R.
\end{equation}
It remains to consider rational $\gamma$. According to Theorem~\ref{thmC1drop}, if $\gamma\in \Q\setminus\{0\}$ satisfies $\log|\gamma|/\log b\not\in\Q$, then \eqref{eqdimdropinq} still holds. In view of \eqref{eqShmerkin},  a natural question is whether for such $\gamma$ we can obtain quantitative strengthening of \eqref{eqdimdropinq}. We first point out that, in general, we cannot expect Shmerkin's bound \eqref{eqShmerkin} to hold for these $\gamma$. To see this, let $C=K_{3,\{0,2\}}$ be the middle-third Cantor set and take $\gamma=4$. Then $\dim_{\rm H}C=\frac{\log2}{\log3}$ and $\log\gamma/\log b=\log 4/\log3\not\in\Q$. However, it is easily checked that $4C\cap C$ contains the self-similar set generated by the IFS $\left\{\frac{x}{9}, \frac{x+8}{9}\right\}$, which has Hausdorff dimension $\frac{\log2}{\log9}$. Hence,
\[\overline{\dim}_{\rm M}(4C\cap C)\geq\dim_{\rm H}(4C\cap C)\geq \frac{\log 2}{\log 9}>\max\left\{\frac{2\log 2}{\log 3}-1,0\right\}.\]
Thus the analogue of~\eqref{eqShmerkin} fails for this example. Nevertheless, in the following,  we establish a quantitative dimension drop result for a class of rationals $\gamma$ satisfying a certain arithmetic condition. 

\begin{thm}\label{thmdimensiondrop}
Let $\gamma=\frac{s}{r}b^{\ell}$ be a rational number, where $r\in\N$, $\ell\in\Z $, $s\in\Z\setminus\{0\}$ are such that  $|s|\neq r$ and $\gcd(s,r)=\gcd(sr,b)=1$. Let  $q\in \N$ with   $q>\frac{\log(|s|+r)}{\log(b/(\#D))}+1$.  Then 
\begin{equation}\label{eqKbDalphaupp}
\overline{\dim}_{\rm M} \left((\gamma K_{b,D}+\alpha) \cap K_{b,D}\right)\leq \frac{\log((\#D)^q-1)}{q\log b}
\end{equation}
for any $\alpha\in \R$.
\end{thm}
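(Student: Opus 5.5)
The plan is a symbolic counting argument with carries, reduced first to the case $\ell=0$.

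\emph{Step 1: reduction to $\ell=0$.} Write $K=K_{b,D}$. By \eqref{eqKbDiden}, $bK=\bigcup_{d\in D}(K+d)$, so $b^{\ell}K$ is a finite union of translates of $K$ when $\ell\ge 0$. Hence $(\gamma K+\alpha)\cap K$ is a finite union of sets of the form $(\frac{s}{r}K+\alpha')\cap K$. When $\ell<0$ we instead rescale the whole intersection by $b^{-\ell}$ and apply the same identity to the right-hand copy of $K$. Upper Minkowski dimension is stable under finite unions and similarities. So it suffices to prove \eqref{eqKbDalphaupp} for $\gamma=s/r$ and arbitrary $\alpha$.

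\emph{Step 2: carry automaton.} Fix $\alpha$ and take a point $x=\frac{s}{r}y+\alpha$ in the intersection, with $x,y\in K$. Then
\[
rx-sy=r\alpha .
\]
Expand $x=\sum\varepsilon_j b^{-j}$ and $y=\sum\eta_j b^{-j}$ with digits in $D$, and group the digits into blocks of length $q$. Let $X_n,Y_n\in D_q:=\{\sum_{i<q}d_ib^i:d_i\in D\}$ be the $n$-th blocks. Comparing the expansion of $r\alpha$ block by block gives a recursion of the form
\[
rX_n-sY_n+c_{n}=b^qc_{n-1}+A_n ,
\]
where $A_n$ is the $n$-th block of $r\alpha$ (fixed) and the carries $c_n$ lie in a set $F$ of integers of size $\le |s|+r$. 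This is the standard bound on carries, since $|rx-sy|$-type tails are bounded by $r+|s|$. Thus the admissible block sequences $(X_n)$ are paths in a finite automaton whose states are carries.

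\emph{Step 3: the key combinatorial claim.} For every carry state $c_{n-1}$, not all $(\#D)^q$ words $X_n\in D_q$ admit some $Y_n\in D_q$ and $c_n\in F$ satisfying the recursion. Granting this, the number of admissible length-$nq$ prefixes of $x$ is at most $|F|\,((\#D)^q-1)^n$, because branching at the state level only multiplies by a constant. This gives
\[
N_{b^{-nq}}\bigl((\tfrac{s}{r}K+\alpha)\cap K\bigr)\le C\,((\#D)^q-1)^n,
\]
which is exactly \eqref{eqKbDalphaupp}.

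\emph{Step 4: proving the claim (main obstacle).} I would argue by contradiction. Suppose every $X\in D_q$ is admissible. Reducing modulo $b^q$ gives
\[
r D_q\subset sD_q+F' \pmod{b^q}
\]
for a fixed translate $F'$ of $-F$, of size $\le|s|+r$. Since $\gcd(r,b)=1$, multiplication by $r$ is a bijection of $\Z/b^q$, so the left side has $(\#D)^q$ elements. First try a scale-by-scale comparison. Look modulo $b^{q-1}$, or at the top digit. The set $D_q$ has a missing digit, so it avoids a full residue class of gaps of length $b^{q-1}$. Meanwhile $sD_q+F'$ modulo $b^{q-1}$ has at most $(|s|+r)(\#D)^{q-1}$ elements, and the hypothesis $q>\frac{\log(|s|+r)}{\log(b/\#D)}+1$ says exactly that
\[
(|s|+r)(\#D)^{q-1}<b^{q-1}.
\]
If that comparison does not close, use the asymmetry of the lowest digits. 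The last digit of $rX$ ranges over $rD\bmod b$, and that of $sY+f$ over $sD+f\bmod b$. Iterating this digit by digit with $\gcd(s,r)=1$ and $|s|\neq r$ should rule out $rD_q$ being an exact translate-cover of $sD_q$. The extreme elements are where $|s|\neq r$ enters: compare the spans $r\max D_q$ versus $|s|\max D_q$. Getting this combinatorial step to close cleanly, with exactly the stated threshold on $q$, is where I expect the real work to be.
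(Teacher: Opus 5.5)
Your reduction to $\ell=0$ is fine, but there is a genuine gap in two places. First, the count in Step 3 does not follow from the per-state claim. Your top-down carry automaton is nondeterministic. From a state $c_{n-1}$ and a block $X_n$ there can be several pairs $(Y_n,c_n)$, hence several successor states. So a prefix of $x$ is in general compatible with a whole set $S$ of current states. Writing $\mathcal{X}(c)$ for the admissible blocks from state $c$, a bound of the form $C((\#D)^q-1)^n$ needs $\#\bigcup_{c\in S}\mathcal{X}(c)\le(\#D)^q-1$ for the sets $S$ that actually occur. A bound on $\#\mathcal{X}(c)$ for single states is not enough, and the union can be all of $D_q$. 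Counting (prefix, state path) pairs instead costs a factor up to $\#F$ per block, which changes the exponent.

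Second, the combinatorial claim is left unproved, and in the form you reach it is the wrong target. That form is a covering $rD_q\subset sD_q+F'\pmod{b^q}$ by about $r+|s|$ translates. The hypothesis $(|s|+r)(\#D)^{q-1}<b^{q-1}$ only says that $sD_{q-1}+F'$ misses some residue mod $b^{q-1}$, which is compatible with containing $rD_{q-1}$. Reducing a covering mod $b$ also gives no matching between digits, so your lowest-digit and span comparisons have nothing rigid to act on.

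The missing idea is to count pairs and to peel blocks from the least significant end. A prefix of $x$ together with the current carry determines the prefix of $y$. So what you must bound is the number of $(A,B)\in D_m^2$ with $|rA-sB-\alpha r b^m|\le r+|s|$. That is a sum, over $O(r+|s|)$ integers $h$, of $\#\{(A,B)\in D_m^2: rA-sB=h\}$.

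For an exact equation, reading blocks from the bottom makes the carry deterministic. You have $ra_0-sb_0\equiv h\pmod{b^q}$; then $h_1=(h-ra_0+sb_0)/b^q$ is fixed by the chosen pair, and so on. So each block contributes at most $M_{r,s}(q)=\max_{c}\#\{(u,v)\in D_q^2: ru-sv\equiv c\pmod{b^q}\}$ choices, with a \emph{single} residue $c$. Since $r$ is invertible mod $b^q$, $M_{r,s}(q)\le(\#D)^q$, with equality only if $sD_q+c\equiv rD_q\pmod{b^q}$ exactly.

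This exact equality can be refuted as follows.
\begin{itemize}
\item Mod $b$ it gives a bijection $\pi\colon D\to D$ with $sa+c_0\equiv r\pi(a)\pmod b$. Set $\tau(a)=(sa+c_0-r\pi(a))/b$.
\item Lifting gives $sD_{q-1}+c_1+\tau(a)\equiv rD_{q-1}\pmod{b^{q-1}}$ for every $a\in D$.
\item Hence the residue set of $sD_{q-1}+c_1$, of size $(\#D)^{q-1}$, is invariant under translation by $\tau(a)-\tau(a')$, where $|\tau(a)-\tau(a')|<r+|s|$.
\item A nonzero such translation would force orbits of size at least $b^{q-1}/(r+|s|)>(\#D)^{q-1}$. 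This is exactly where the threshold on $q$ enters.
\item So $\tau$ is constant, say $\tau_0$, and $sD+c_0-b\tau_0=rD$ as sets of integers. Comparing diameters gives $|s|=r$, a contradiction.
\end{itemize}
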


\begin{rem}\label{Rem1Thm1}
{\rm (i)} Since $\dim_{\rm H}K_{b,D}=\log(\#D)/\log b$, \eqref{eqKbDalphaupp}  implies that   
\begin{equation}\label{eqstrinq}
\overline{\dim}_{\rm M}\left( (\gamma K_{b,D}+\alpha) \cap K_{b,D}\right)<\dim_{\rm H}K_{b,D}, \qquad \forall \alpha\in\R.
\end{equation}
	
{\rm (ii)} Note that the upper bound in \eqref{eqKbDalphaupp} is independent of the exponent $\ell$ and the translation $\alpha$.
\end{rem}

Under the assumption that $b$ is a prime and $K_{b,D}$ is symmetric (one such example is the middle-third Cantor set), we can combine Corollary~\ref{CorEGamma}, Remark~\ref{remps}(ii) and Shmerkin's result \cite[Corollary 6.4]{Shmerkin19} to  give a complete characterization of  $\gamma\in \R$ such that \eqref{eqstrinq} holds for every $\alpha\in\R$. This strengthens Corollary~\ref{CorEGamma} for a class of missing-digit sets.	

\begin{cor}\label{CorChar}
Suppose that $b$ is a prime number and $K_{b,D}$ is symmetric. 
Let $\gamma\in\R$. Then \eqref{eqstrinq} holds for every $\alpha\in\R$  if and only if $\gamma\notin\{\pm b^{k}: k\in \Z\}$.
\end{cor}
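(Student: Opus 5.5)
We combine three inputs: Shmerkin's bound \eqref{eqShmerkin} for irrational $\gamma$, Corollary~\ref{CorEGamma} for positive rational $\gamma$, and Remark~\ref{remps}(ii) for negative rational $\gamma$. The set $E=K_{b,D}$ is generated by the homogeneous IFS $\{x/b+d/b\}_{d\in D}$, which satisfies the OSC with $V=(0,1)$. Since $\#D<b$ we have $\dim_{\rm H}E<1$, and $\rho^{-1}=b$ is prime by hypothesis. So $E$ meets the hypotheses of Corollary~\ref{CorEGamma}, and, being symmetric, also those of Remark~\ref{remps}(ii).

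\emph{The ``if'' direction.} Suppose $\gamma\notin\{\pm b^k:k\in\Z\}$. The case $\gamma=0$ is trivial: $(\gamma E+\alpha)\cap E$ has at most one point, so its upper Minkowski dimension is $0<\dim_{\rm H}E$.
\begin{itemize}
\item If $\gamma$ is irrational, then \eqref{eqShmerkin} bounds the upper Minkowski dimension of the intersection by $\max\{2\dim_{\rm H}E-1,0\}$. This is strictly less than $\dim_{\rm H}E$ because $0<\dim_{\rm H}E<1$.
\item If $\gamma\in\Q$ with $\gamma>0$, then $\gamma\notin\{b^k\}=\{\rho^k\}$, and Corollary~\ref{CorEGamma} gives \eqref{eqstrinq}.
\item If $\gamma\in\Q$ with $\gamma<0$, then $\gamma\notin\{-b^k\}$, and Remark~\ref{remps}(ii) gives \eqref{eqstrinq}.
\end{itemize}

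\emph{The ``only if'' direction.} Suppose $\gamma=\pm b^k$. We must find some $\alpha$ for which \eqref{eqstrinq} fails.

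First take $\gamma=b^k$. Applying \eqref{eqKbDiden} repeatedly gives $E\supset b^{-n}E+t$ for a suitable $t$, for every $n\ge0$.
\begin{itemize}
\item If $k\le 0$, set $n=-k$ and $\alpha=t$. Then $(\gamma E+\alpha)\cap E=b^{k}E+t$, whose dimension is $\dim_{\rm H}E$.
\item If $k>0$, choose $\alpha$ with $E\supset b^{-k}(E-\alpha)$. Then $\gamma E+\alpha\supset E$, so the intersection equals $E$.
\end{itemize}
Taking $\alpha=-b^{k}t$ in the second case also works, since $b^{k}(b^{-k}E+t)\subset b^kE$.

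For $\gamma=-b^k$, use symmetry: $-E=E-c$. Hence $\gamma E+\alpha=b^kE+(\alpha-b^kc)$, which reduces this case to the previous one.

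In every case the intersection contains a similar copy of $E$. Therefore
\[\overline{\dim}_{\rm M}\ge\dim_{\rm H}\ge\dim_{\rm H}E,\]
and \eqref{eqstrinq} fails.

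The main check is that every hypothesis of Corollary~\ref{CorEGamma} and Remark~\ref{remps}(ii) holds for $K_{b,D}$; the rest is a straightforward case split.
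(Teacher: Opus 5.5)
Your proof is correct and follows the paper's argument. The ``if'' direction splits into the cases $\gamma=0$, irrational $\gamma$ (Shmerkin's bound), and positive or negative rational $\gamma$ (Corollary~\ref{CorEGamma} and Remark~\ref{remps}(ii)), exactly as in the paper. For the ``only if'' direction you write out the choice of $\alpha$ explicitly, which the paper handles only by pointing to the proof of Corollary~\ref{CorEGamma}: your $\alpha=-b^kt$ with $b^kt\in D_k$ is the paper's $\alpha\in -A_k$, and the symmetry $-E=E-c$ takes care of $\gamma=-b^k$.
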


We now summarize some related studies in the literature. Intersections of self-similar sets with their affine copies  fit naturally  into the more general framework of slices of fractal sets, as the intersection $(\gamma E+\alpha)\cap E$
can be identified as the slice $\ell_{\gamma, \alpha}\cap (E\times E)$
through a non-singular affine transformation, where for $\gamma, \alpha\in\R$ with $\gamma\neq 0$, $\ell_{\gamma, \alpha}$ denotes the line in $\R^2$ with slope $\gamma$ that passes through  $(0,\alpha)$. 

Marstrand’s Slicing Theorem~\cite{Marstrand54S} implies that, for every Borel set $F\subset \R^2$ and every fixed slope $\gamma$,   
\begin{equation}\label{eqMarstrand}
\dim_{\rm H} (\ell_{\gamma, \alpha}\cap F)\leq \max\{\dim_{\rm H}F-1,0\}
\end{equation}
for Lebesgue almost all $\alpha\in\R$. A higher-dimensional generalization was given by Mattila~\cite{Mattila95}.  
Although \eqref{eqMarstrand} gives a dimension bound for a typical slice of $F$, it is in general challenging to understand the intersection of $F$ with a fixed line. A famous conjecture of Furstenberg~\cite{Furstenberg70} says that if $F$ is the Cartesian product of a $\times p$-invariant set and a $\times q$-invariant set, where $p,q\geq 2$ are integers with $\log p/\log q\notin \Q$,  then \eqref{eqMarstrand} holds for all lines not parallel to the axes. Furstenberg's conjecture was proved independently by Shmerkin~\cite{Shmerkin19} and Wu~\cite{Wu19A}. After their breakthrough, a new proof \cite{Austin22} and  numerous extensions and related works have appeared; we mention only a few~\cite{Algom20,AlgomWu23,Yu21A}.

If we restrict to missing-digit sets, Shmerkin and Wu's result can be stated as follows: let $b_1,b_2\geq 2$ be integers with $\log b_1/\log b_2\notin \Q$, and let $D_1\subset \{0,1,\ldots, b_1-1\}$ and $D_2\subset \{0,1,\ldots, b_2-1\}$ with $2\leq \#D_1<b_1$ and $2\leq\#D_2<b_2$,  then for any $\gamma,\alpha\in \R$, 
\[\dim_{\rm H}\left((\gamma K_{b_1,D_1}+\alpha)\cap K_{b_2,D_2}\right)\leq \max\{\dim_{\rm H}K_{b_1,D_1}+\dim_{\rm H}K_{b_2,D_2}-1,0\}.\]
Notably, prior to this result, Feng, Huang, and Rao \cite{FWR14A} proved the following weaker but still non-trivial inequality: under the same assumption, we have
\begin{equation}\label{eqdimFHR}
\dim_{\rm H}\left((\gamma K_{b_1,D_1}+\alpha)\cap K_{b_2,D_2}\right)< \min\{\dim_{\rm H}K_{b_1,D_1},\dim_{\rm H}K_{b_2,D_2}\}
\end{equation}
for any $\gamma,\alpha\in \R$; see  \cite[Theorem 1.6]{FWR14A}. It is helpful to compare \eqref{eqdimFHR} with our result Theorem~\ref{thmdimensiondrop}. We can understand the strict inequality  \eqref{eqdimFHR} as a dimension drop property for intersections of missing-digit sets with  incommensurable defining bases (i.e., $\log b_1/\log b_2\notin \Q$). 	Certainly, \eqref{eqdimFHR} does not apply when the two missing-digit sets are identical. However, our result Theorem~\ref{thmdimensiondrop} establishes a quantitative dimension drop property for the intersection of a single missing-digit set $K_{b,D}$ with its affine copies, which arises from an arithmetic non-resonance between the rational slope $\gamma$ and the base  $b$.

There have been many other related works on the slices of fractal sets. Many of them investigate the dimension formula valid for almost all  slices with a fixed direction, so that one can  compare with the value predicted by Marstrand's Slicing Theorem (often referred to as Marstrand's value); see, for instance, \cite{Hawkes75S,KenyonPeres91I,WWX13D,WuXi13D}. Slices of classical fractals such as the Sierpi{\'n}ski gasket and the Sierpi{\'n}ski carpet have also been studied; see \cite{BFS12S,LiuXiZhao07D,MS13D}.  Our result Theorem~\ref{thmdimensiondrop} differs from these studies in that we prove that, under a natural arithmetic condition on the slope, a quantitative dimension drop occurs for every slice in the prescribed direction, rather than merely for almost every slice.

We end the introduction with a brief outline of the proof of
Theorem~\ref{thmdimensiondrop}. The proof has two main ingredients. First, we reduce the number of basic intervals at a given  level that  meet $(\gamma K_{b,D}+\alpha)\cap K_{b,D}$ to the number of solutions to certain Diophantine equations. These solution counts are then controlled using congruence reductions and a block-counting argument; see Lemma~\ref{lem:block-counting} and the beginning of the proof of Theorem~\ref{thmdimensiondrop} in Section~\ref{S3}. The other key ingredient is to show that the quantity $M_{r,s}(q)$ in Lemma~\ref{lem:block-counting} is strictly smaller than the cardinality of $D_q$. This strict inequality is what yields the dimension drop; see Lemma~\ref{lem:Mqrsinq}.

The paper is organized as follows. In Section~\ref{S2}, we study the dimension drop for $C^1$-intersections of self-similar sets on $\R$. We prove Theorem~\ref{thmC1drop} and its consequence Corollary~\ref{CorEGamma} in this section. In the rest of the paper, we study the dimension drop for affine intersections of missing-digit sets. We first establish  several preliminary lemmas in Section~\ref{S3}. Then we present the proofs of Theorem~\ref{thmdimensiondrop} and Corollary~\ref{CorChar} in Section~\ref{S4}.

\section{Dimension drop for \texorpdfstring{$C^1$}{C1}-intersections: proof of Theorem~\ref{thmC1drop}}\label{S2}

In this section, we study the dimension drop for $C^1$-intersections of homogeneous self-similar sets on $\R$ satisfying the OSC. Our target is to prove Theorem~\ref{thmC1drop}. 

Throughout this section, let \(E\subset\R\) be the self-similar set  generated by a homogeneous IFS
$\Phi=\{\phi_i(x)=\rho x+a_i\}_{i=1}^m$
satisfying the OSC, where $0<\rho<1$ and $a_i\in\R$, $i=1,\ldots, m$.
Let $\Sigma=\{1,\ldots,m\}$. 
For $n\in\N$ and a word $
I=i_1\ldots i_n\in\Sigma^n$,
write
\[
|I|=n,
\qquad
\phi_I=\phi_{i_1}\circ\cdots\circ\phi_{i_n},
\qquad
E_I=\phi_I(E).
\]
Then
\[
\phi_I(x)=\rho^{|I|}x+a_I
\]
for some translation \(a_I\in\R\). Write $\Sigma^*=\bigcup_{n\in \N}\Sigma^n$.

A key step to prove Theorem~\ref{thmC1drop}
is to establish the following lemma, which is a same-set and slope-tracking version of \cite[Proposition 2.2]{FWR14A}.

For \(n\ge1\), define
\begin{equation}\label{eqdefsn}
	s_n=\frac{\log(m^n-1)}{n\log(1/\rho)}.
\end{equation}
Note that
\[
s_n<s:=\dim_{\rm H}E=\frac{\log m}{\log(1/\rho)}
\quad \text{ and } \quad 
\lim_{n\to\infty}s_n=s.
\]

\begin{lem}\label{lemBlowup}
 Let \(f:\R\to\R\) be a \(C^1\)-diffeomorphism and put \(h=f^{-1}\). There exist integers $M_0, p_0\in\N$ such that the following statement holds. Fix \(n\in\N\). If
	\[\overline{\dim}_{\rm M}(E\cap h(E))>s_n,\]
	then there exist an integer \(k\le M_0\), affine maps $g_1,\ldots,g_k:\R\to\R$, points $ z_1,\ldots,z_k\in f(E)\cap E,$
	and an integer \(p\in\Z\) with $|p|\leq p_0$, such that the following hold.
	\begin{enumerate}
		\item[{\rm (i)}] For every word \(J\in\Sigma^n\),
		\[
		E_J\cap\left(\bigcup_{i=1}^k g_i(E)\right)\neq\emptyset.
		\]
		\item[{\rm (ii)}] For $1\leq i\leq k$, the linear part of  $g_i$ is $g_i'=\rho^p h'(z_i)$.
	\end{enumerate}
	Here $M_0$ and $p_0$ are independent of $n$.
\end{lem}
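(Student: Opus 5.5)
Throughout, write $N_\delta(\cdot)$ for covering numbers as in the introduction, and let $\mathcal{F}_t$ denote the cylinders $E_I$ with $|I|=t$. By the OSC, each such cylinder has diameter $\rho^t\operatorname{diam}E$, and there is a constant $c_0$ such that any interval of length $\rho^t$ meets at most $c_0$ cylinders of level $t$. The plan follows the localization-and-rescaling scheme of \cite[Proposition 2.2]{FWR14A}, applied to the single set $E$ and keeping track of slopes.

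\textbf{Step 1: a good cylinder by counting.} Suppose $\overline{\dim}_{\rm M}(E\cap h(E))>s_n$. Then there are arbitrarily large $t$ such that $E\cap h(E)$ meets more than $C\rho^{-ts_n}=C(m^n-1)^{t/n}$ cylinders in $\mathcal{F}_t$, for any prescribed constant $C$. Group the level-$t$ cylinders (with $t$ a multiple of $n$) along the $m^n$-ary tree of blocks of length $n$. A standard pigeonhole/tree argument then gives a word $I$ such that every child $E_{IJ}$, $J\in\Sigma^n$, meets $E\cap h(E)$. Indeed, if every node of the tree had at most $m^n-1$ children meeting the set, the count at level $t=jn$ would be at most $(m^n-1)^j$, contradicting the lower bound.

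To get uniformity in $n$ and to secure the needed smallness, it is more convenient to run this at a deep scale. So I choose $I$ with $|I|=\ell$ as large as desired and with the property that $E_{IJ}\cap h(E)\ne\emptyset$ for all $J\in\Sigma^n$.

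\textbf{Step 2: rescaling.} Apply $\phi_I^{-1}$. The set $\phi_I^{-1}(E_I\cap h(E))\subset E$ then meets every $E_J$, $J\in\Sigma^n$, and it is contained in $\phi_I^{-1}\circ h(E\cap f(E_I))$. Since $f$ is a $C^1$ diffeomorphism, $f(E_I)$ lies in an interval $U$ of length at most $L\rho^\ell$, where $L=\|f'\|_\infty$ on a neighbourhood of $E$. I pick levels $t'$ with $\rho^{t'}\approx\rho^\ell$ up to a factor depending only on $L$ and $\inf|h'|$. Concretely, $t'=\ell+p$ with $|p|\le p_0$, where $p_0$ depends only on $f$ and $E$. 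The interval $U$ meets at most $k\le M_0$ cylinders $E_{K_1},\dots,E_{K_k}$ of level $t'$, where $M_0$ depends on $c_0$, $L$, and $\inf|h'|$, but not on $n$.

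Hence
\[
\phi_I^{-1}\circ h(E\cap U)\subset\bigcup_{i=1}^k \phi_I^{-1}\circ h\circ\phi_{K_i}(E),
\]
and (i) holds for the maps $G_i=\phi_I^{-1}\circ h\circ\phi_{K_i}$. Choose $z_i\in f(E)\cap E\cap E_{K_i}$; discard any $i$ for which $E_{K_i}\cap f(E)=\emptyset$, since those pieces contribute nothing. Each $G_i$ is $C^1$ on $E$ with derivative
\[
\rho^{t'-\ell}h'(\cdot)=\rho^p h'(\cdot),
\]
and this is uniformly close to $\rho^p h'(z_i)$ because $\operatorname{diam}E_{K_i}\to0$ and $h'$ is uniformly continuous.

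\textbf{Step 3: passing to exact affine maps.} Define $g_i$ to be the affine map with slope $\rho^ph'(z_i)$ that agrees with $G_i$ at the point $\phi_{K_i}^{-1}(z_i)$. Then $\|G_i-g_i\|_{\infty,E}\le\varepsilon(\ell)\to0$ as $\ell\to\infty$. The main obstacle is that (i) requires exact intersections, not approximate ones. I handle this by compactness. Fix $n$, and let $\ell\to\infty$ along a sequence of good words. The data $(k,p)$ range over a finite set, so pass to a subsequence on which they are constant. The slopes $\rho^ph'(z_i)$ lie in a compact set bounded away from $0$. The translations are bounded because $G_i(E)$ must meet $E$. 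Also $z_i\in f(E)\cap E$, which is compact.

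Extract convergent subsequences of the slopes, translations, and points, so that $g_i^{(\ell)}\to g_i$ and $z_i^{(\ell)}\to z_i$. The limit $g_i$ has slope $\rho^ph'(z_i)$ by continuity of $h'$, and $z_i\in f(E)\cap E$ by closedness. For each $J$, some $i$ (constant along a further subsequence) has $E_J\cap G_i^{(\ell)}(E)\ne\emptyset$, hence $\operatorname{dist}(E_J,g_i^{(\ell)}(E))\le\varepsilon(\ell)$. Passing to the limit gives $E_J\cap g_i(E)\ne\emptyset$, by compactness of $E_J$ and $E$. This yields (i) and (ii), with $M_0$ and $p_0$ independent of $n$.
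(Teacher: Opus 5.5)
Your proposal is correct and follows essentially the same route as the paper: a tree-counting argument yields arbitrarily deep words $I$ all of whose $n$-extensions meet $h(E)$, you localize to at most $M_0$ cylinders $E_{K_i}$ of level $|I|+p$ with $p$ fixed by the distortion of $f$, rescale via $G_i=\phi_I^{-1}\circ h\circ\phi_{K_i}$, linearize, and pass to a limit while tracking the slopes $\rho^p h'(z_i)$. One minor slip: with your discard rule ($E_{K_i}\cap f(E)=\emptyset$) the set $G_i(E)$ need not meet $E$, so either discard instead those $i$ with $E_{K_i}\cap f(E_I)=\emptyset$ (as the paper effectively does through $\mathcal I_j$, which also lets you take $z_i\in E_{K_i}\cap f(E_I)$), or note that the translations stay bounded anyway because $E_{K_i}$ meets $U$, an interval of length $O(\rho^{|I|})$ containing $f(E_I)$.
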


To prove Lemma~\ref{lemBlowup}, we follow the localization and recalling argument in the proof of \cite[Proposition 2.1]{FWR14A}, and in the meantime,  we track the slopes of the affine maps generated in the recalling process. We need the following lemma proved in \cite{FWR14A}, which provides the integer $M_0$ in Lemma~\ref{lemBlowup}. 

For \(0<r<\rho\), define 
\[
\mathcal A_r=\{I\in\Sigma^n: n\in\N, \rho^{n}\le r<\rho^{n-1}\}.
\]

\begin{lem}\cite[Lemma~2.1]{FWR14A}\label{lemOSC}
	There exists \(M_0\in\N\) such that for every \(0<r<\rho\) and every \(I\in\mathcal A_r\),
\[
	\#\{J\in\mathcal A_r:{\rm dist}(E_I,E_J)\le r\}\le M_0,
	\]
    where ${\rm dist}(E_I,E_J) := \inf\{|x-y| : x \in E_I, y \in E_J\}$.
\end{lem}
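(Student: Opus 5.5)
The plan is the standard volume (packing) argument for the open set condition, made particularly simple here by homogeneity. Since all maps in $\Phi$ share the ratio $\rho$, the condition $\rho^{n}\le r<\rho^{n-1}$ determines $n$ uniquely, so $\mathcal A_r=\Sigma^n$ for that $n$. Every $E_J$ with $J\in\mathcal A_r$ is then a similar copy of $E$ with ratio $\rho^n\in(\rho r, r]$.

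\emph{Step 1: set-up from the open set condition.} Let $V$ be the bounded open set from the OSC.
\begin{itemize}
\item Iterating the OSC shows that the sets $\phi_J(V)$, $J\in\Sigma^n$, are pairwise disjoint. Indeed, if $J\neq J'$ first differ at position $t$, then after applying $\phi_{J|_{t-1}}^{-1}$ the two sets lie in $\phi_{j_t}(V)$ and $\phi_{j'_t}(V)$, which are disjoint.
\item Since $\phi_i(\overline V)\subset\overline V$ for all $i$, the compact set $\overline V$ is mapped into itself by the Hutchinson operator. Hence $E\subset\overline V$, and therefore $E_J\subset\phi_J(\overline V)$.
\item As $V\subset\R$ is open and non-empty, it contains an interval of length $c>0$. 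Put $L=\operatorname{diam}V<\infty$.
\end{itemize}

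\emph{Step 2: localization.} Fix $I\in\mathcal A_r$ and a point $x\in E_I$. Suppose $J\in\mathcal A_r$ satisfies ${\rm dist}(E_I,E_J)\le r$. Then every point of $\phi_J(V)$ lies within distance
\[
\operatorname{diam}E_I+r+\operatorname{diam}\phi_J(\overline V)\le \rho^n L+r+\rho^nL\le (2L+1)r
\]
of $x$. Here we used $E_I\subset\phi_I(\overline V)$ and $\rho^n\le r$. So all such $\phi_J(V)$ lie in the interval $[x-(2L+1)r,\,x+(2L+1)r]$.

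\emph{Step 3: counting.} The sets $\phi_J(V)$ are pairwise disjoint, and each contains an interval of length $c\rho^n> c\rho r$. Comparing Lebesgue measures gives
\[
\#\{J\}\cdot c\rho r\le 2(2L+1)r .
\]
Hence we may take $M_0=\lceil 2(2L+1)/(c\rho)\rceil$, which is independent of $r$ and $I$.

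There is no real obstacle. The only point needing care is the inclusion $E\subset\overline V$ together with disjointness at level $n$; both are routine consequences of the OSC. Homogeneity removes the usual complication of cylinders of different generations, because every word in $\mathcal A_r$ has the same length.
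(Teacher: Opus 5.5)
Your proof is correct. The disjointness of the sets $\phi_J(V)$, the inclusion $E\subset\overline V$, the localization of every relevant $\phi_J(V)$ in an interval of length $2(2L+1)r$, and the Lebesgue-measure comparison using $\rho^n>\rho r$ are all sound.

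The paper does not reprove this lemma; it quotes it from Feng--Huang--Rao, where it rests on exactly this standard OSC volume-packing argument. Your observation that homogeneity forces $\mathcal A_r=\Sigma^n$ only removes the bookkeeping for cylinders of different generations.
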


\begin{proof}[Proof of Lemma~\ref{lemBlowup}]
	We claim that for every \(j\in\N\) there exists a word \(W_j\in\Sigma^*\) with $
	|W_j|\ge j$
	such that
	\begin{equation}\label{eqEWjJ}
		E_{W_jJ}\cap h(E)\neq\emptyset
		\qquad\text{for every }J\in\Sigma^n.
	\end{equation}
	Suppose this fails. Then there exists \(j_0\in\N\) such that for every word \(W\) with \(|W|\ge j_0\),  there is at least one  \(J\in\Sigma^n\) such that \[
	E_{WJ}\cap h(E)=\emptyset.\]
	For \(q\ge1\), let \(\Gamma_q\) be the collection of words of the form $
	UJ_1\ldots J_q$, 
	where \(|U|=j_0\), each \(J_i\in\Sigma^n\), and
	\[
	E_{UJ_1\ldots J_q}\cap h(E)\neq\emptyset.
	\]
	Note that the collection \(\{E_I:I\in\Gamma_q\}\) covers \(E\cap h(E)\). Moreover,   $\#\Gamma_1\leq m^{j_0}(m^n-1)$ and 
	\[\#\Gamma_{k+1}\leq (m^n-1)\#\Gamma_k,\qquad \forall k\geq 1.\]
	Hence
	\[\#\Gamma_q\leq m^{j_0} (m^n-1)^q,\qquad  \forall  q\geq1. \]
	Therefore, 
	\[\overline{\dim}_{\rm M}(E\cap h(E))\leq \limsup_{q\to\infty}\frac{\log(m^{j_0}(m^n-1)^q)}{\log(1/\rho^{j_0+nq})}=s_n,\]
	contradicting the hypothesis. This proves \eqref{eqEWjJ}.
	
Since \(h\) is a \(C^1\)-diffeomorphism, we have $
	m_h:=\min_{x\in {\rm conv}(E)} |h'(x)|>0$.
	Define
	\[
	r_j=\frac{\rho^{|W_j|}{\rm diam}E}{m_h},
	\]
where ${\rm diam}E$ is the diameter of $E$. 
	By omitting the first few $j$'s, we can assume $0<r_j<\rho$ for $j\geq1$. 
	Let \(N_j\in\N\) be the unique integer such that
	\begin{equation}\label{eqNj}
		\rho^{N_j}\le r_j<\rho^{N_j-1}.
	\end{equation}
	Let $$
	\mathcal I_j=\{I\in\Sigma^{N_j}:E_I\cap h^{-1}(E_{W_j})\neq\emptyset\}.$$
	We claim that there exists \(M_0\in\N\) such that 
\begin{equation}\label{eqIjlessN0}
		\#\mathcal I_j\le M_0.
	\end{equation}
	Indeed, if \(I,I'\in\mathcal I_j\), choose \(z\in E_I\), \(z'\in E_{I'}\) with $h(z),h(z')\in E_{W_j}$.
	Then
	\[
	|h(z)-h(z')|\leq\rho^{|W_j|}{\rm diam}E.
	\]
	Since \(|h(u)-h(v)|\ge m_h|u-v|\) on \({\rm conv}(E)\), we get
	$|z-z'|\leq r_j.$
	Thus \({\rm dist}(E_I,E_{I'})\leq r_j\). Then \eqref{eqIjlessN0} follows from Lemma~\ref{lemOSC}.
	
	Passing to a subsequence in \(j\), we may assume that for every $j\geq 1$, $
	\#\mathcal I_j=k$
	for a fixed \(k\le M_0\). Write $\mathcal I_j=\{I_{j,1},\ldots,I_{j,k}\}$.
	For \(1\leq i\leq k\), define
	\[
	h_{j,i}=\phi_{W_j}^{-1}\circ h\circ\phi_{I_{j,i}}.
	\]
	We show that for every \(J\in\Sigma^n\),
	\begin{equation}\label{eqEJint}
		E_J\cap\left(\bigcup_{i=1}^k h_{j,i}(E)\right)\neq\emptyset.
	\end{equation}
	To see this, fix $J\in \Sigma^n$.
	By \eqref{eqEWjJ}, we can choose $
	y\in E_{W_jJ}\cap h(E)$.
	Write \(y=h(x)\) with \(x\in E\). Since \(y\in E_{W_j}\), we have $
	x\in E\cap h^{-1}(E_{W_j})$.
	Thus \(x\in E_{I_{j,i}}\) for some \(i\), say \(x=\phi_{I_{j,i}}(u)\) with \(u\in E\). Since \(y\in E_{W_jJ}\), $
	\phi_{W_j}^{-1}(y)\in E_J.$
	Also note that 
	\[
	\phi_{W_j}^{-1}(y)=\phi_{W_j}^{-1}h\phi_{I_{j,i}}(u)=h_{j,i}(u)\in h_{j,i}(E).
	\]
	This proves \eqref{eqEJint}.

	Fix \(x_0\in E\). Define an affine map
	\[
	A_{j,i}(x)=h_{j,i}(x_0)+h_{j,i}'(x_0)(x-x_0).
	\]
	Since $
	\phi_{I_{j,i}}'(x)=\rho^{N_j}$
	and $\phi_{W_j}'(x)=\rho^{|W_j|},$
	we have 
	\[
	h_{j,i}'(x)=\rho^{N_j-|W_j|}h'(\phi_{I_{j,i}}(x)),
	\]
	and so
	\begin{equation}\label{eqAjislope}
		A_{j,i}'=\rho^{N_j-|W_j|}h'(\phi_{I_{j,i}}(x_0)).
	\end{equation}
	Since $h$ is $C^1$ and  ${\rm diam}\phi_{I_{j,i}}({\rm conv}(E))=\rho^{N_j}{\rm diam}E\to0$ as $j\to \infty$, we see that, uniformly for \(x\in E\),
	\begin{align}
		|h_{j,i}(x)-A_{j,i}(x)|&=|h_{j,i}(x)-A_{j,i}(x)-(h_{j,i}(x_0)-A_{j,i}(x_0))|\nonumber\\
		&\leq \sup_{z\in {\rm conv}(E)}\left|\rho^{N_j-|W_j|}h'(\phi_{I_{j,i}}(z))-\rho^{N_j-|W_j|}h'(\phi_{I_{j,i}}(x_0))\right|\cdot{\rm diam }E\nonumber\\
		&\to 0 \qquad \text{ as } j\to\infty.\label{eqhjiAji0}
	\end{align}
	Here we have used that \(\rho^{N_j-|W_j|}\) is uniformly bounded, which follows from \eqref{eqNj}.

	Since \(h'\) is bounded and \(\rho^{N_j-|W_j|}\) is uniformly bounded,
	the linear parts \(A_{j,i}'\) are uniformly bounded.  The translations of $A_{j,i}$ are also uniformly bounded: by the definition of \(\mathcal I_j\), for each \(i\in \{1,\ldots,k\}\) there exists $z_{j,i}\in E_{I_{j,i}}$ such that $
	h(z_{j,i})\in E_{W_j}$.
	Writing \(z_{j,i}=\phi_{I_{j,i}}(u_{j,i})\) with \(u_{j,i}\in E\), we obtain
	\[
	h_{j,i}(u_{j,i})=h_{j,i}\phi_{I_{j,i}}^{-1}(z_{j,i})=\phi_{W_j}^{-1}h(z_{j,i})\in E.
	\]
	Hence \(h_{j,i}(E)\cap E\neq\emptyset\). Therefore, the translation parts of \(A_{j,i}\) are uniform bounded.
	
	Passing to a further subsequence, for each \(i\in \{1,\ldots,k\}\) there is an affine map $g_i$ such that 
	$A_{j,i}\to g_i$
	uniformly on compact sets. By \eqref{eqhjiAji0}, the compact sets \(h_{j,i}(E)\) converge in Hausdorff distance to \(g_i(E)\). Taking limits in \eqref{eqEJint}, we get
	\[
	E_J\cap\left(\bigcup_{i=1}^k g_i(E)\right)\neq\emptyset
	\]
	for every \(J\in\Sigma^n\).
	
	Finally, we extract the slopes of $g_i$. 
	Since \(\rho^{N_j-|W_j|}\) is uniformly bounded and  \(N_j-|W_j|\in\Z\), it can take only finitely many values. By passing to a further subsequence, we can assume for every $j\geq 1$, 
	$N_j-|W_j|=p$
	for some fixed \(p\in\Z\). From \eqref{eqAjislope},
	\begin{equation}\label{eqgiprime}
		g_i'=\lim_{j\to\infty}A_{j,i}'
		=\rho^p\lim_{j\to\infty}h'(\phi_{I_{j,i}}(x_0)).
	\end{equation}
	Next we replace \(\phi_{I_{j,i}}(x_0)\) by a point in $E\cap f(E)$. As above, choose $
	z_{j,i}\in E_{I_{j,i}}$
	with
	$h(z_{j,i})\in E_{W_j}.$
	Then $z_{j,i}\in E\cap h^{-1}(E)=E\cap f(E)$.
	Moreover,
	\[
	|z_{j,i}-\phi_{I_{j,i}}(x_0)|\le \rho^{N_j}{\rm diam}E\to0.
	\]
	Passing to a further subsequence, we assume $z_{j,i}\to z_i\in E\cap f(E)$.
	Then by the continuity of \(h'\), \eqref{eqgiprime} becomes $
	g_i'=\rho^p h'(z_i)$.
	This proves the lemma.
\end{proof}

The following consequence of Lemma~\ref{lemBlowup} enables us to apply a result of Feng and Wang~\cite{FengWang09O} on affine embeddings of self-similar sets, which is an  important  step to derive Theorem~\ref{thmC1drop}. 

\begin{lem}\label{lemaffineembedding}
	Assume
	\[
	\overline{\dim}_{\rm M}(f(E)\cap E)=\dim_{\rm H} E.
	\]
	Then there exist an affine map \(\psi(x)=\lambda x+t\), a point $x_0\in E\cap f^{-1}(E)$,
	and an integer $q\in\Z$, such that $\psi(E)\subset E$ and  $  
	|\lambda|=\rho^q|f'(x_0)|$.
\end{lem}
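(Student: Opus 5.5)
Since $\overline{\dim}_{\rm M}(f(E)\cap E)=\dim_{\rm H}E=s>s_n$ for every $n$, and $f(E)\cap E=E\cap h(E)$ with $h=f^{-1}$, Lemma~\ref{lemBlowup} applies for every $n\in\N$. It yields $k_n\le M_0$, affine maps $g_{n,1},\ldots,g_{n,k_n}$, points $z_{n,i}\in f(E)\cap E$ and integers $p_n$ with $|p_n|\le p_0$ such that
\[
E_J\cap\Bigl(\bigcup_{i=1}^{k_n}g_{n,i}(E)\Bigr)\neq\emptyset \quad\text{for all } J\in\Sigma^n,
\qquad g_{n,i}'=\rho^{p_n}h'(z_{n,i}).
\]
The plan is to let $n\to\infty$ and show that one of the limiting affine images of $E$ is contained in $E$.

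First, pass to a subsequence along which $k_n=k$ and $p_n=p$ are constant. The slopes $g_{n,i}'$ are bounded, and bounded away from $0$, because $|h'|$ is bounded above and below on ${\rm conv}(E)$. The translations are also bounded: we may assume each $g_{n,i}(E)$ meets ${\rm conv}(E)$, since otherwise we may discard $g_{n,i}$. More simply, for $n$ large every $g_{n,i}(E)$ meets some $E_J$, and we keep only those that do. Passing to a further subsequence, $g_{n,i}\to g_i$ uniformly on compacts and $z_{n,i}\to z_i\in E\cap f(E)$, which is closed. By continuity of $h'$ we get $g_i'=\rho^p h'(z_i)\neq 0$.

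Next, show $E\subset\bigcup_{i=1}^k g_i(E)$. Fix $x\in E$ and choose words $J_n\in\Sigma^n$ with $x\in E_{J_n}$. Pick $y_n\in E_{J_n}\cap g_{n,i_n}(E)$; then $|y_n-x|\le\rho^n{\rm diam}E$. Passing to a subsequence with $i_n=i$ constant, we have $y_n=g_{n,i}(u_n)$ with $u_n\in E$, and $u_n\to u\in E$. Uniform convergence then gives $x=g_i(u)\in g_i(E)$. Hence $E$ is a finite union of the compact sets $g_i(E)$.

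The main obstacle is to pass from the covering $E\subset\bigcup_i g_i(E)$ to an inclusion $\psi(E)\subset E$. The natural route is a Baire category argument. Since $E$ is compact and covered by finitely many closed sets $g_i(E)\cap E$, one of them, say $g_1(E)\cap E$, has nonempty interior relative to $E$. So there is a word $I$ with $E_I\subset g_1(E)$, that is, $g_1^{-1}\circ\phi_I(E)\subset E$. Set $\psi=g_1^{-1}\circ\phi_I$, which is affine with slope
\[
\lambda=\frac{\rho^{|I|}}{\rho^{p}h'(z_1)}=\pm\rho^{|I|-p}f'(x_0),
\]
where $x_0:=h(z_1)$, since $h'(z_1)=1/f'(h(z_1))$. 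Because $z_1\in E\cap f(E)$, we have $x_0\in f^{-1}(E)\cap E$. With $q=|I|-p\in\Z$ this gives $|\lambda|=\rho^q|f'(x_0)|$ and $\psi(E)\subset E$, as required.
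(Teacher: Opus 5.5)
Your proposal is correct and follows the paper's proof essentially step by step. You apply Lemma~\ref{lemBlowup} for every $n$, extract subsequences with $k_n=k$, $p_n=p$, $g_{n,i}\to g_i$ and $z_{n,i}\to z_i$, prove $E\subset\bigcup_i g_i(E)$, and then use Baire category to get $\psi=g_i^{-1}\circ\phi_J$ with $q=|J|-p$ and $x_0=h(z_i)$. Your extra steps fill in details the paper leaves implicit: discarding the $g_{n,i}$ whose images miss $E$ to bound the translations (do this before fixing $k_n=k$), and noting $g_i'\neq 0$ so that $g_i^{-1}$ exists.
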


\begin{proof}
	Put $h=f^{-1}$.	Since $f$ is a $C^1$ diffeomorphism on $\R$, we have 
	\[
	\overline{\dim}_{\rm M}(E\cap h(E))=\dim_{\rm H} E:=s.
	\]
	For $n\in\N$, let $s_n$ be defined in \eqref{eqdefsn}. 
	Since \(s_n<s\),  Lemma~\ref{lemBlowup} applies. Thus for each  \(n\in\N\) there exist affine maps $g_{n,1},\ldots,g_{n,k_n}$ with $k_n\le M_0$ and $g_{n,i}'=\rho^{p_{n}}h'(z_{n,i})$ for some integer $p_{n}$ and some point $z_{n,i}\in E\cap f(E)$, such that
	\begin{equation}\label{eqEjcapi1k}
		E_J\cap \left(\bigcup_{i=1}^{k_n}g_{n,i}(E)\right)\neq \emptyset.
	\end{equation}
	Since $E\cap f(E)$ is compact, $k_n\leq M_0$, $|p_{n}|\leq p_0$ and $M_0,p_0$ are independent of $n$, 
	by passing to a subsequence in \(n\), we may assume $k_n=k$,
	all \(p_{n}=p\) are fixed in \(n\),  $
	g_{n,i}\to g_i$, and $
	z_{n,i}\to z_i\in E\cap f(E)$ for each \(1\le i\le k\).
	Then $
	g_i'=\rho^{p}h'(z_i)$.
	We claim that 
	\begin{equation}\label{EsubcupikgiE}
		E\subset\bigcup_{i=1}^k g_i(E).
	\end{equation}
	Let \(x\in E\). For each selected \(n\), let \(J_n\in\Sigma^n\) be a word with \(x\in E_{J_n}\). By \eqref{eqEjcapi1k}, choose $
	x_n\in E_{J_n}\cap g_{n,i_n}(E)$
	for some \(i_n\). Since \(k\) is finite, along a subsequence depending on \(x\), \(i_n=i\) is constant. Because \({\rm diam} E_{J_n}\to0\), we have \(x_n\to x\). Since \(g_{n,i}(E)\to g_i(E)\) in the Hausdorff metric, \(x\in g_i(E)\). This proves \eqref{EsubcupikgiE}.
	
	Now apply the Baire category theorem to the compact metric space \(E\). Since \(E\) is covered by finitely many closed sets \(E\cap g_i(E)\), one of them has nonempty relative interior in \(E\). Hence there exist  \(i\in\{1,\ldots,k\}\) and an open set \(V\subset\R\) such that $
	\emptyset\neq E\cap V\subset g_i(E)$.
Therefore, there exists a word \(J\in \Sigma^*\)  such that
	\[\phi_J(E)\subset E\cap V\subset g_i(E).
	\]
	Define $
	\psi=g_i^{-1}\circ\phi_J$.
	Then $\psi$ is an affine map satisfying $
	\psi(E)\subset E$ and 
	\[
	|\psi'|=\frac{|\phi_J'|}{|g_i'|}
	=\frac{\rho^{|J|}}{\rho^{p}|h'(z_i)|}
	=\frac{\rho^{|J|-p}}{|h'(z_i)|}.
	\]
	Letting \(q=|J|-p\) and $x_0\in E\cap f^{-1}(E)$ such that $f(x_0)=z_i$, we complete the proof.
\end{proof}

With Lemma~\ref{lemaffineembedding} in hand, we can apply the following Logarithmic Commensurability Theorem due to Feng and Wang~\cite{FengWang09O} to derive Theorem~\ref{thmC1drop}.

\begin{thm}\cite[Theorem 1.1]{FengWang09O}\label{thmFW}
	Let \(E\subset\R\) be the self-similar set  generated by a homogeneous IFS $\Phi$ with contraction ratio $\rho\in (0,1)$. Suppose that \(\Phi\) satisfies the OSC and $\dim_{\rm H} E<1$.
	If \(\psi(x)=\lambda x+t\), \(\lambda\ne0\), satisfies $\psi(E)\subset E$,
	then $
	\log|\lambda|/\log\rho\in\Q$.
\end{thm}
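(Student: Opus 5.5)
The plan is to argue by contradiction. Assume $\log|\lambda|/\log\rho\notin\Q$. Replacing $\psi$ by $\psi^2$, we may take $\lambda>0$. We may also assume $0<\lambda<1$, since $\psi(E)\subset E$ forces $|\lambda|\le 1$, and $\lambda=1$ is excluded because $\log 1/\log\rho=0\in\Q$. The idea is to manufacture a continuum of affine self-embeddings of $E$ and then show that a set of dimension $<1$ cannot support one.

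\emph{Step 1: renormalising iterates.} For each $n$ we have $\psi^n(E)\subset E$. Let $I_n\in\Sigma^*$ be a longest word such that $\psi^n(E)$ meets only the cylinders $E_{I_n J}$, i.e. $\psi^n(E)$ is "essentially'' contained in $E_{I_n}$. By Lemma~\ref{lemOSC} and the OSC, only boundedly many cylinders of a given level meet $\psi^n(E)$. So after an extra bounded adjustment (possibly splitting into at most $M_0$ pieces, as in the proof of Lemma~\ref{lemBlowup}), we obtain affine maps $\phi_{I_n}^{-1}\circ\psi^n$. These have slopes $\lambda^n\rho^{-|I_n|}$ lying in a fixed compact interval $[c_0,c_0']\subset(0,\infty)$, and they map pieces of $E$ into $E$. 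Since $\log\lambda/\log\rho$ is irrational, the numbers $n\log\lambda-|I_n|\log\rho$ are equidistributed modulo $\log\rho$. Hence the slopes are dense in some interval $[c_1,c_2]$ with $c_1<c_2$.

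\emph{Step 2: compactness.} The translation parts are bounded, because each image meets $E$. Taking limits of the maps and using Hausdorff convergence of the images, as in the proof of Lemma~\ref{lemaffineembedding} (including the Baire category step to pass from a finite cover back to a genuine embedding of a cylinder), we obtain the following. For every $t$ in a nondegenerate interval $T$ there is an affine map $g_t(x)=tx+a_t$ with $g_t(E_{J_t})\subset E$ for some word $J_t$. By a further pigeonhole on $J_t$, we may fix a single word $J$ for a subinterval of $t$'s, so that $t E_J+a_t\subset E$ for all $t\in T$.

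\emph{Step 3: contradiction with $\dim_{\rm H}E<1$.} This is the main obstacle. Fix $x,y\in E_J$ with $x\ne y$. Then $t(x-y)\in E-E$ for all $t\in T$, which by itself is not contradictory, since $E-E$ can contain intervals. So I would instead use the full family.

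First I would try to choose $t\mapsto a_t$ Borel. This is possible because $\{(t,a): tE_J+a\subset E\}$ is closed, so a measurable selection theorem applies. Consider $F\colon T\times E_J\to E$, $F(t,u)=tu+a_t$.

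Using $E$-measure, each fibre map $u\mapsto F(t,u)$ pushes the self-similar measure $\mu|_{E_J}$ to a measure on $E$ of dimension $s$. I would then exploit the fact that varying $t$ rescales the internal gap structure of $E_J$ continuously. Concretely, fix three points $u_1<u_2<u_3$ of $E_J$, chosen at the endpoints of a gap of $E_J$, so that the gap ratio is $(u_3-u_2)/(u_2-u_1)$. Their images $F(t,u_i)$ are three points of $E$ whose pairwise distances are $t(u_{i+1}-u_i)$. Thus $E$ contains, for a continuum of scales $t$, copies of one fixed configuration with a gap of length $t(u_3-u_2)$ and $F(t,u_2),F(t,u_3)\in E$.

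Because $E$ is homogeneous and satisfies the OSC, gaps of $E$ at scale $\rho^k$ come from finitely many shapes, up to rescaling by $\rho^k$. By Lemma~\ref{lemOSC}, the set of possible gap lengths of $E$ inside $[\rho^{k+1},\rho^k]$ is of the form $\rho^k G$ with $G$ finite. Hence only countably many gap lengths occur at all. I would arrange $u_2,u_3$ to be endpoints of a genuine gap of $E$ whose image under $F(t,\cdot)$ is again a gap of $E$; this is where porosity, i.e. $\dim_{\rm H}E<1$, is needed, in order to choose the configuration so that the image gap is not filled by other pieces of $E$. Then $t(u_3-u_2)$ ranges over an interval but must lie in a countable set, which is a contradiction. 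Therefore $\log\lambda/\log\rho\in\Q$.
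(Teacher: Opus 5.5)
The paper does not prove this statement. It is quoted from Feng--Wang and used as a black box in the proof of Theorem~\ref{thmC1drop}, so your proposal has to stand on its own.

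Your Steps~1--2 are a sound reduction, close in spirit to Lemmas~\ref{lemBlowup} and~\ref{lemaffineembedding}:
\begin{itemize}
\item Choose the renormalisation level by $\rho^{N_n}\le\lambda^n<\rho^{N_n-1}$. A ``longest word'' $I_n$ can be far too short when $\psi^n(E)$ straddles cylinders.
\item Irrationality then makes the slopes $\lambda^n\rho^{-N_n}$ dense in $[1,\rho^{-1}]$.
\item Limits plus Baire give a fixed word $J$ and uncountably many $t$ with $tE_J+a_t\subset E$. Closedness plus Baire even gives an interval of such $t$.
\end{itemize}

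The genuine gap is Step~3, which is where the whole content of the theorem sits. Everything rests on the sentence ``arrange $u_2,u_3$ to be endpoints of a genuine gap of $E$ whose image under $F(t,\cdot)$ is again a gap of $E$''. This must hold for one fixed gap and uncountably many $t$, and you give no argument for it.

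The inclusion $tE_J+a_t\subset E$ gives no control over $E\cap F(t,(u_2,u_3))$. Parts of $E$ outside the copy may lie inside the image gap. In that case the complementary intervals of $E$ there are shorter than $t(u_3-u_2)$, and the countability of gap lengths says nothing. Porosity cannot fill this in: it only guarantees that every window contains some large hole of $E$, not that the particular hole dictated by the copy is empty.

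Two smaller points. Lemma~\ref{lemOSC} bounds the number of neighbouring cylinders, not the number of their relative positions, so your finite set $G$ is unjustified. This is beside the point, though, since any closed set has only countably many gap lengths. The measurable-selection and push-forward remarks are never used.

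What is missing is a rigidity mechanism that forces $E$ to coincide with the copy on some window. One way to supply it:
\begin{itemize}
\item With $s=\dim_{\rm H}E$, the OSC gives $\mathcal H^s(E\cap U)\le|U|^s$ for every $U$. The reason is that the upper convex density is $1$ at $\mathcal H^s$-a.e.\ point, and self-similarity would spread any violating set to almost every point.
\item Suppose $J_0$ is an interval with $\mathcal H^s(E\cap J_0)=|J_0|^s$, and $f$ is affine with $f(E)\subset E$. Then $\mathcal H^s(E\cap f(J_0))=|f(J_0)|^s=\mathcal H^s(f(E)\cap f(J_0))$.
\item Any point of $E$ in ${\rm int}\,f(J_0)\setminus f(E)$ would carry a whole small cylinder of positive $\mathcal H^s$-measure. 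Hence $E\cap{\rm int}\,f(J_0)=f(E)\cap{\rm int}\,f(J_0)$.
\item A gap of $E$ with endpoints in ${\rm int}\,J_0$ exists because $E$ is Lebesgue-null. Every embedding $f_t=g_t\circ\phi_J$ sends it onto a gap of $E$ of length proportional to $t$, and your countability argument closes.
\end{itemize}

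However, the existence of such an extremal interval $J_0$ itself needs proof. Under the SSC, a blow-up plus upper-semicontinuity argument gives it; touching cylinders under the mere OSC need more care. Nothing of this kind appears in your proposal, so as written Step~3 does not produce the contradiction.
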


\begin{proof}[Proof of Theorem~\ref{thmC1drop}]
	Assume on the contrary that $\overline{\dim}_{\rm M}\left(f(E)\cap E\right)=\dim_{\rm H}E$.
	Then by Lemma~\ref{lemaffineembedding}, there exists an affine map \(\psi(x)=\lambda x+d\) such that $\psi(E)\subset E$ and  $  
	|\lambda|=\rho^q|f'(x_0)|$ for some $q\in\Z$ and  \(x_0\in E\cap f^{-1}(E)\).  It then follows from  Theorem~\ref{thmFW} that $
	\log |f'(x_0)|/\log\rho\in\Q$, contradicting the hypothesis. This completes the proof of the theorem. 
\end{proof}

To end this section, we prove Corollary~\ref{CorEGamma}.

\begin{proof}[Proof of Corollary~\ref{CorEGamma}]
Since $\rho^{-1}$ is a prime number, it is a fact from elementary number theory that if $\gamma\in\Q$, $\gamma>0$ and $\gamma\not\in \{\rho^{k}: k\in\Z\}$, then $\log\gamma/\log\rho\not\in \Q$.  Hence the ``if'' part follows from Theorem~\ref{thmC1drop}. To see the ``only if'' part, write $\rho^{-1}=p$. Let $\gamma=p^k$ for some integer $k\geq0$ (the case that $k<0$ is proved similarly). Rewrite the identity $E=\bigcup_{i=1}^m(\rho E+a_i)$
as
\begin{equation}\label{eqPEEA}
pE=E+A,
\end{equation} 
where $A=\{\rho^{-1}a_i: 1\leq i\leq m\}$ and for $U,V\subset \R$, $U+V:=\{u+v: u\in U, v\in V\}$. Iterating \eqref{eqPEEA} gives 
\[p^jE=E+A_j,\quad j\geq 0,\]
where $A_0:=\{0\}$ and $A_j:=A+pA+\cdots+p^{j-1}A$ for $j\geq1$. Pick $\alpha\in -A_k$. Then we have
\[(\gamma E+\alpha)\cap E=(E+A_k+\alpha)\cap E\supset E.\]
Hence \eqref{eqgEaEless} does not hold. This proves the corollary.  
\end{proof} 

\section{Dimension drop for affine intersections of missing-digit sets: Preliminary lemmas}\label{S3}

Following the strategy described at the end of the Introduction, in this section, we establish several lemmas that are needed to prove Theorem~\ref{thmdimensiondrop}. The main results in this section are Lemmas~\ref{lem:block-counting} and ~\ref{lem:Mqrsinq}.

We first give some notation. 
For $m\geq 0$, put
\[
D_m=\left\{\sum_{j=0}^{m-1}\varepsilon_jb^j:\varepsilon_j\in D, 0\leq j\leq m-1\right\},
\]
with the convention that $D_0=\{0\}$.
Equivalently, $D_m$ is the set of integer codings of the left endpoints of the level $m$
basic intervals in the construction of $K_{b,D}$: those endpoints are $A/b^m$ with $A\in D_m$, and we have
\[K_{b,D}=\bigcap_{m=1}^{\infty}\bigcup_{A\in D_m}\left[\frac{A}{b^m}, \frac{A+1}{b^m}\right].\]
Notice that
$\#D_m=(\#D)^m$.

For subsets $E,F\subset \mathbb Z$ and an integer $N\geq 1$, we write
$E\equiv F \pmod{N}$
if their images in $\mathbb Z/N\mathbb Z$ are equal. Similarly, we write
$E\subseteq F\pmod{N}$
if the image of $E$ in $\mathbb Z/N\mathbb Z$ is contained in the image of $F$.

For  $q\in \N$ and $r,s\in\Z$,   define
\begin{equation}\label{eqMqrs}
	M_{r,s}(q):=\max_{c\in \Z}
	\#\{(u,v)\in D_q^2:ru-sv\equiv c\pmod{b^q}\}.
\end{equation}
In the following lemma, we apply $M_{r,s}(q)$ to bound the number of solutions $(A,B)\in D_m^2$ to the Diophantine equation $rA-sB=h$ for every fixed $h\in \Z$, which arises naturally in counting basic intervals of level $m$ that intersect $(\gamma K_{b,D}+\alpha)\cap K_{b,D}$; see the beginning of the proof of Theorem~\ref{thmdimensiondrop} in Section~\ref{S3}. 

\begin{lem}\label{lem:block-counting}
Let  $q\in \N$ and $r,s\in \Z$. Let $M_{r,s}(q)$ be defined as in \eqref{eqMqrs}.  For $m\in \N$ write
$m=k q+j$, where $k\geq 0$ and $0\leq j <q$.
	Then for every $h\in\Z$,
	\[
	\#\{(A,B)\in D_m^2:rA-sB=h\}\leq (\#D)^{2q} M_{r,s}(q)^{m/q}.
	\]
\end{lem}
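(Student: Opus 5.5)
We will prove the bound by induction on the number $k$ of full blocks of length $q$, peeling off the lowest $q$ base-$b$ digits of $A$ and $B$ at each step. Every $A\in D_m$ can be written uniquely as $A=u+b^qA'$ with $u\in D_q$ and $A'\in D_{m-q}$, because the digits of $A$ are $\varepsilon_0,\dots,\varepsilon_{m-1}\in D$ with $D\subset\{0,\dots,b-1\}$, so base-$b$ expansions are unique. Likewise $B=v+b^qB'$ with $v\in D_q$ and $B'\in D_{m-q}$.

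For $m\ge q$, put $N_m(h)=\#\{(A,B)\in D_m^2: rA-sB=h\}$ and $N_m^*=\max_{h\in\Z}N_m(h)$. The equation $rA-sB=h$ becomes
\[
ru-sv+b^q(rA'-sB')=h .
\]
Reducing modulo $b^q$ gives $ru-sv\equiv h\pmod{b^q}$. By \eqref{eqMqrs}, the number of admissible pairs $(u,v)\in D_q^2$ is therefore at most $M_{r,s}(q)$.

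Fix one admissible pair $(u,v)$. Then $(A',B')$ must satisfy $rA'-sB'=h'$, where $h'=(h-ru+sv)/b^q$ is an integer determined by $(u,v)$. Consequently there are at most $N_{m-q}(h')\le N_{m-q}^*$ such pairs $(A',B')$. Summing over the admissible $(u,v)$ yields the recursion
\[
N_m^*\le M_{r,s}(q)\,N_{m-q}^*\qquad (m\ge q).
\]

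Iterating this $k$ times from $m=kq+j$ gives $N_m^*\le M_{r,s}(q)^k N_j^*$. For the base case we use the trivial bound $N_j^*\le \#D_j^2=(\#D)^{2j}\le(\#D)^{2q}$, since $j<q$. This is the origin of the factor $(\#D)^{2q}$ in the statement.

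It remains to compare $M_{r,s}(q)^k$ with $M_{r,s}(q)^{m/q}$. Since $k\le m/q$, we need $M_{r,s}(q)\ge1$. This holds because $D_q$ is nonempty: choosing any $(u,v)\in D_q^2$ and taking $c=ru-sv$ shows the maximum in \eqref{eqMqrs} is at least $1$. Hence $M_{r,s}(q)^k\le M_{r,s}(q)^{m/q}$, and the lemma follows.

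The only points requiring care are the uniqueness of the block decomposition (which uses $D\subset\{0,\dots,b-1\}$) and the fact that $h'$ depends only on $(u,v)$. Both are routine, so we do not expect a real obstacle.
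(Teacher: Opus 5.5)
Your proof is correct and takes essentially the same route as the paper. The paper peels off the low-order blocks $(a_i,b_i)\in D_q^2$ one at a time through the successive congruences $ra_i-sb_i\equiv h_i \pmod{b^q}$, and bounds the leftover top block $(A_*,B_*)\in D_j^2$ trivially by $(\#D)^{2j}$; this is exactly your recursion $N_m^*\le M_{r,s}(q)\,N_{m-q}^*$ written out step by step. One cosmetic fix: define $N_m(h)$ for all $m\ge 0$ (with $D_0=\{0\}$) rather than only for $m\ge q$, since you later use $N_{m-q}^*$ and $N_j^*$ with indices below $q$.
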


\begin{proof}
Set $Q=b^q$.	For $A,B\in D_m$, we rewrite  their $b$-ary expansions in blocks of length $q$:
	\[
	A=a_0+a_1Q+\cdots+a_{k-1}Q^{k-1}+A_*Q^k,
	\]
	\[
	B=b_0+b_1Q+\cdots+b_{k-1}Q^{k-1}+B_*Q^k,
	\]
	where $a_i,b_i\in D_q$ and $A_*,B_*\in D_{j}$. Suppose that
	\begin{equation}\label{eqAkBh}
		rA-sB=h.
	\end{equation}
	Set $h_0=h$. Reducing the equation modulo $Q$ gives
	\[
	ra_0-sb_0\equiv h_0\pmod Q.
	\]
By the definition of $M_{r,s}(q)$,  there are at most $M_{r,s}(q)$ choices for $(a_0,b_0)$. Once $(a_0,b_0)$ is chosen, define
	\[
	h_1=\frac{h_0-ra_0+sb_0}{Q}\in\Z.
	\]
	Then \eqref{eqAkBh} implies that 
	\[\frac{r(A-a_0)}{Q}-\frac{s(B-b_0)}{Q}=h_1,\]
which, modulo $Q$, yields that
\[ra_1-sb_1\equiv h_1\pmod{Q}.\]
Continuing the above argument, we see that for  each $0\leq i\leq k-1$, there exists $h_i\in\Z$ such that 
	\[
	ra_i-sb_i\equiv h_i\pmod Q
	\]
	and so there are at most $M_{r,s}(q)$ choices of $(a_i,b_i)$. Hence the $k$ full blocks contribute at most
	$M_{r,s}(q)^k$ choices. The remaining blocks $(A_*,B_*)\in D_{j}^2$ contribute at most
	$(\#D_{j})^2=(\#D)^{2j}$ choices. Therefore,
	\[
	\#\{(A,B)\in D_m^2:rA-sB=h\}\leq (\#D)^{2j} M_{r,s}(q)^k\leq (\#D)^{2q} M_{r,s}(q)^{m/q},
	\]
since $j<q$, $k\leq m/q$ and $M_{r,s}(q)\geq1$.
\end{proof}

Another key ingredient in the proof of Theorem~\ref{thmdimensiondrop} is to show that  for all sufficiently large $q$, $M_{r,s}(q)$ is strictly less than $\#D_q$, which  leads to the dimension drop in Theorem~\ref{thmdimensiondrop}. We achieve this by establishing the following.  

\begin{lem}\label{lem:Mqrsinq}
	Let $r,s\in\Z$ satisfy $|r|\neq |s|$ and $\gcd(rs,b)=1$. Let $M_{r,s}(q)$ be defined as in \eqref{eqMqrs}. 
	Then for every $q \in \N$ satisfying $
	(|r|+|s|)(\#D)^{q-1}<b^{q-1}$,
	we have
	\[M_{r,s}(q)\leq (\#D)^q-1<(\#D)^q.\]
\end{lem}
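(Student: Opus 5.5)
The plan is to fix $c\in\Z$ and bound $\#\{(u,v)\in D_q^2: ru-sv\equiv c\pmod{b^q}\}$ directly. First I will establish the trivial bound $\#D_q$. Every element of $D_q$ lies in $[0,b^q)$, so distinct elements of $D_q$ are distinct modulo $b^q$. Since $\gcd(s,b)=1$, for each $u$ the congruence forces $v\equiv s^{-1}(ru-c)\pmod{b^q}$, so there is at most one admissible $v\in D_q$. Symmetrically, since $\gcd(r,b)=1$, each $v$ has at most one admissible $u$. Hence the solution set is the graph of a partial injection $u\mapsto v(u)$ from $D_q$ to $D_q$, and the count is at most $(\#D)^q$.

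The whole content of the lemma is therefore to rule out equality. So I will argue by contradiction. Assume every $u\in D_q$ has a partner; then $u\mapsto v(u)$ is a bijection of $D_q$. Lift the congruences to integer equations
\[
ru-sv(u)=c+t(u)b^q,\qquad u\in D_q .
\]
Because $0\le u,v<b^q$, we have $|ru-sv|<(|r|+|s|)b^q$. Consequently $t(u)$ takes values in a set $T$ of integers with $\#T\le |r|+|s|$.

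Next I will decompose by the top digit, writing $u=u'+b^{q-1}d$ and $v=v'+b^{q-1}e$ with $u',v'\in D_{q-1}$ and $d,e\in D$. Reducing the equation modulo $b^{q-1}$ shows that $u'$ determines $v'$, again using the invertibility of $s$ modulo $b$. Substituting back, the top digits must satisfy
\[
rd-se=K,
\]
where $K$ is an integer depending only on $u'$ and $t$.

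The aim is a pigeonhole argument comparing the number $(\#D)^q$ of pairs with the number of available "shapes" $(t,u')$, of which there are at most $(|r|+|s|)(\#D)^{q-1}$. This count is strictly less than $b^{q-1}$ by hypothesis, and that is presumably where the hypothesis enters. Concretely, I will consider the set of integers $\{ru-sv(u)\}$, which has at most $\#T$ distinct values after the reduction. I will then try to show that the bijection forces $b^{q-1}$ distinct residues to appear somewhere, which contradicts $(|r|+|s|)(\#D)^{q-1}<b^{q-1}$.

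A complementary route I will try is summation. Since $v$ is a bijection of $D_q$, we have $\sum_u v(u)=\sum_u u=:S$. Summing the equations gives
\[
(r-s)S=(\#D)^q c+b^q\sum_u t(u).
\]
Together with the corresponding identity obtained by restricting to digit classes, this should use $|r|\neq|s|$ and yield a contradiction. Note that $|r|\neq|s|$ must enter somewhere, because for $r=s$ the identity map is a genuine bijection.

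The main obstacle is precisely this exclusion of a full bijection. It is the step where both hypotheses, $|r|\ne|s|$ and $(|r|+|s|)(\#D)^{q-1}<b^{q-1}$, must be used, and I expect the pigeonhole over $(t,u')$ to need careful bookkeeping to turn into a clean contradiction.
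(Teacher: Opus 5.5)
\textbf{There is a genuine gap.} Your reduction is correct and matches the paper: for fixed $c$ the map $u\mapsto v$ is a partial injection, so the count is at most $(\#D)^q$, and equality means $sD_q+c\equiv rD_q\pmod{b^q}$. The entire content of the lemma is ruling out this congruence, and that step is not supplied. The mechanisms you sketch do not work as stated.

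\emph{The top-digit split points the wrong way.} Reducing modulo $b^{q-1}$ only reproduces the same congruence at level $q-1$. What is left is that for each $u'\in D_{q-1}$ one has $rD+\kappa(u')\equiv sD\pmod b$, with carries satisfying $|\kappa(u')-\kappa(u'')|<|r|+|s|$. This says a set of size $\#D$ in $\Z/b\Z$ is invariant under small translations. That would be contradictory only if $(|r|+|s|)\#D<b$, which is not your hypothesis.

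\emph{The pigeonhole does not close.} Comparing $(\#D)^q$ pairs with at most $(|r|+|s|)(\#D)^{q-1}$ shapes $(t,u')$ forces a collision only when $\#D>|r|+|s|$. Even then, two solutions $rd-se=rd'-se'=K$ are not contradictory. The assertion that the bijection ``forces $b^{q-1}$ distinct residues'' is never given a source.

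\emph{The summation identity does not close either.} $(r-s)S=(\#D)^qc+b^q\sum_u t(u)$ is a single linear relation in the free unknowns $c$ and $\sum_u t(u)$. It does not use the size hypothesis and gives no contradiction by itself.

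\emph{The missing idea is to split off the lowest digit.} Write $c=c_0+bc_1$ with $0\le c_0<b$.
\begin{itemize}
\item Reducing $sD_q+c\equiv rD_q$ modulo $b$ gives a bijection $\pi\colon D\to D$ with $sa+c_0\equiv r\pi(a)\pmod b$.
\item Set $\tau(a)=(sa+c_0-r\pi(a))/b$. Using $D_q=\bigsqcup_{a\in D}(a+bD_{q-1})$ and a cardinality count, one gets $sD_{q-1}+c_1+\tau(a)\equiv rD_{q-1}\pmod{b^{q-1}}$ for \emph{every} $a\in D$.
\item Hence a set $F$ of $(\#D)^{q-1}$ residues in $\Z/b^{q-1}\Z$ is invariant under translation by $\tau(a)-\tau(a')$, an integer of absolute value $<|r|+|s|$.
\item A nonzero translation $t$ has orbits of length $b^{q-1}/\gcd(t,b^{q-1})\ge b^{q-1}/|t|$. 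So invariance would force $(|r|+|s|)(\#D)^{q-1}\ge b^{q-1}$. This is exactly where the hypothesis enters, with $b^{q-1}$ as the modulus.
\item Therefore $\tau\equiv\tau_0$ is constant, and $sD+c_0-b\tau_0=rD$ holds as sets of integers, not just residues.
\item Comparing diameters gives $|s|\,{\rm diam}\,D=|r|\,{\rm diam}\,D$, i.e.\ $|s|=|r|$. This is where $|r|\neq|s|$ is used.
\end{itemize}
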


To prove Lemma~\ref{lem:Mqrsinq}, we  need  several preliminary  results. The following lemma is a simple fact on translation-invariant subsets of $\Z/N\Z$, $N\in \N$.

\begin{lem}\label{lem:bounded_translation}
	Let $T,N\in \N$. Let $F\subset \Z/N\Z$ be a non-empty set with $T(\#F)<N$.
Then there is no nonzero integer $t$ with $|t|\leq T$ such that $F+t\equiv F \pmod{N}$.
\end{lem}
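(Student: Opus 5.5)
We argue by contradiction, using the fact that translation by $t$ permutes $\Z/N\Z$. Suppose $t$ is a nonzero integer with $|t|\le T$ and $F+t\equiv F\pmod N$; by replacing $t$ with $-t$ if necessary (since $F-t\equiv F$ follows from $F+t\equiv F$), we may assume $1\le t\le T$. Let $d=\gcd(t,N)$. The invariance $F+t=F$ in $\Z/N\Z$ implies $F+jt=F$ for every $j\in\Z$. Hence $F$ is invariant under the subgroup $\langle t\rangle\subset \Z/N\Z$, which is generated by $d$ and has order $N/d$.

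It follows that $F$ is a union of cosets of $\langle d\rangle$. Since $F$ is non-empty, it contains at least one full coset, and therefore $\#F\ge N/d$. Because $d\mid t$ and $t\ge 1$, we have $d\le t\le T$. Combining these,
\[
T(\#F)\ \ge\ d\cdot\frac{N}{d}\ =\ N,
\]
which contradicts the assumption $T(\#F)<N$. So no such $t$ exists.

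The only step that needs care is the claim that invariance under $t$ forces invariance under $d=\gcd(t,N)$. This holds because the subgroup of $\Z/N\Z$ generated by the class of $t$ coincides with the subgroup generated by the class of $d$, by Bézout. An alternative route avoids subgroups: pick $x\in F$. The elements $x,x+t,x+2t,\ldots$ all lie in $F$, and they are distinct modulo $N$ for $N/d$ consecutive steps. This again gives $\#F\ge N/d\ge N/T$.

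No earlier result of the paper is needed; the statement is an elementary counting fact.
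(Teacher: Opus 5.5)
Your proof is correct and matches the paper's: invariance under translation by $t$ forces $F$ to contain the full orbit of each of its elements, which has size $N/\gcd(t,N)$, and $\gcd(t,N)\le |t|\le T$ then gives $T(\#F)\ge N$. The reduction to $t>0$ is harmless but unnecessary, since $\gcd(t,N)\le |t|$ already holds for any nonzero $t$.
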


\begin{proof}
Suppose on the contrary that there exists a nonzero integer $t$ with
	$|t|\leq T$ such that $F+t\equiv F\pmod{N}$.
	Since $T(\#F)<N$ and $F$ is non-empty, we clearly have 
	$0<|t|<N$ and  so $t\not\equiv 0\pmod{N}$.
	
	Consider the translation map $T_t:\Z/N\Z\to \Z/N\Z$,
	$x\mapsto x+t \pmod{N}$.
	Every orbit of $T_t$ has length $N/\gcd(t,N)$.
	Since $F+t\equiv F\pmod{N}$, $F$ contains the orbit of every element in $F$.  Thus we have $
	\#F\geq N/\gcd(t,N)$.
Since $
	\gcd(t,N)\leq |t|\leq T$, this implies that $\#F\geq N/T$,
	which contradicts $T(\#F)<N$. This proves the lemma.
\end{proof}

The key step to prove Lemma~\ref{lem:Mqrsinq} is the following lemma. 

\begin{lem}\label{lemrDqsDqc}
Let $r,s\in\Z$ satisfy $|r|\neq |s|$ and $\gcd(rs,b)=1$. 
Let $q\in \N$ be such that $(|r|+|s|)(\#D)^{q-1}<b^{q-1}$.
Then $sD_q+c\not\equiv rD_q\pmod{b^q}$
	for every $c\in\Z$.
\end{lem}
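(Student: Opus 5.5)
The plan is to argue by contradiction: peel off the lowest $b$-ary digit, which produces a nonzero translation that leaves a smaller set invariant modulo $b^{q-1}$, and then invoke Lemma~\ref{lem:bounded_translation}. First note that $q\ge 2$ is forced, since for $q=1$ the hypothesis reads $|r|+|s|<1$, which is impossible. Suppose that $sD_q+c\equiv rD_q\pmod{b^q}$ for some $c\in\Z$.

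\textbf{Step 1 (lowest digit).} Use the decomposition $D_q=D+bD_{q-1}$. Since $D\subset\{0,\ldots,b-1\}$, distinct digits of $D$ are distinct modulo $b$. Since $\gcd(rs,b)=1$, both $x\mapsto sx+c$ and $x\mapsto rx$ are injective modulo $b$. Reducing the assumed congruence modulo $b$ gives $sD+c\equiv rD\pmod b$. Hence there is a bijection $\sigma\colon D\to D$ with $sd+c\equiv r\sigma(d)\pmod b$ for all $d\in D$.

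\textbf{Step 2 (compare residue classes).} Both sets are unions of residue classes modulo $b$, and the congruence must hold class by class. The elements of $sD_q+c$ lying in the class of $sd+c$ are exactly $sd+c+bsD_{q-1}$. The elements of $rD_q$ lying in the same class are exactly $r\sigma(d)+brD_{q-1}$. Define the integer
\[
w(d)=\frac{sd+c-r\sigma(d)}{b}.
\]
Equating the two pieces and dividing by $b$ gives
\[
sD_{q-1}+w(d)\equiv rD_{q-1}\pmod{b^{q-1}}\qquad\text{for every } d\in D.
\]
Taking two digits $d,d'\in D$, put $F=sD_{q-1}$ viewed in $\Z/b^{q-1}\Z$ and $t=w(d)-w(d')$. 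Then $F+t\equiv F\pmod{b^{q-1}}$. Moreover
\[
|t|=\frac{|s(d-d')-r(\sigma(d)-\sigma(d'))|}{b}\le\frac{(|r|+|s|)(b-1)}{b}<|r|+|s|,
\]
and $\#F\le \#D_{q-1}=(\#D)^{q-1}$. The hypothesis then says $(|r|+|s|)\,\#F<b^{q-1}$. So Lemma~\ref{lem:bounded_translation}, applied with $T=|r|+|s|$ and $N=b^{q-1}$, gives a contradiction as soon as $t\neq 0$ for some pair $d,d'$.

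\textbf{Step 3 (main obstacle: nonvanishing of $t$).} It remains to exclude the case that $w$ is constant on $D$. If it were, then $sd-r\sigma(d)$ would be a constant $\kappa$ for all $d\in D$, so $\sigma(d)=(sd-\kappa)/r$. Thus $\sigma$ would be the restriction of an affine map of slope $s/r$ carrying $D$ bijectively onto $D$. Since $\#D\ge2$, the diameter of $D$ is positive. Such a map would therefore have to satisfy $\operatorname{diam}D=|s/r|\operatorname{diam}D$, forcing $|s|=|r|$, which contradicts the assumption $|r|\ne|s|$. Hence some pair $d,d'$ gives $t\ne0$, and the contradiction from Step 2 completes the argument. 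The delicate points to check are this step and the class-by-class splitting in Step 2, which relies on the digits of $D$ being distinct residues modulo $b$.
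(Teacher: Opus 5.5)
Your proof is correct and follows essentially the same route as the paper: reduce modulo $b$ to get a bijection of $D$, peel off the lowest digit to obtain $sD_{q-1}+w(d)\equiv rD_{q-1}\pmod{b^{q-1}}$ for every $d\in D$, use Lemma~\ref{lem:bounded_translation} to force $w$ to be constant, and then reach a contradiction by comparing diameters. The differences are cosmetic. You keep $c$ inside $w(d)$ rather than splitting $c=c_0+bc_1$. You also get the equality modulo $b^{q-1}$ directly by matching the pieces over each residue class mod $b$, whereas the paper first proves an inclusion and then counts residues.
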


\begin{proof}
	We argue by contradiction. Suppose that for some $c\in\Z$, one has
\begin{equation}\label{rsmodD}
sD_q+c\equiv rD_q\pmod {b^q}. 
\end{equation}
	Write $c=c_0+bc_1$,
	where $0\leq c_0<b$ and 
	 $c_1\in\Z$.
	Reducing \eqref{rsmodD} modulo $b$, we get
\begin{equation}\label{eqrDc0sDb}
sD+c_0\equiv rD\pmod b. 
\end{equation}
Since $\gcd(rs,b)=1$,  $r$ and $s$ are both coprime to $b$. Hence the sets $sD+c_0$ and $rD$ both have
	exactly $\#D$ distinct residue classes modulo $b$. Thus \eqref{eqrDc0sDb} induces a
	bijection $
	\pi:D\to D$
	such that
	\[
	sa+c_0\equiv r\pi(a)\pmod b, \qquad \forall a\in D.
	\]
For each $a\in D$, define 
	\[
	\tau(a):=\frac{sa+c_0-r\pi(a)}{b}\in\Z.
	\]
	
Note that $D_q$ admits the following disjoint decomposition
\begin{equation}\label{eqDecomDq}
	D_q=\bigsqcup_{a\in D}\left(a+bD_{q-1}\right).
\end{equation}
For $a\in D$ and $A'\in D_{q-1}$, we have
	\begin{equation*}
s(a+bA')+c=sa+c_0+b(sA'+c_1)=r\pi(a)+b\left(sA'+c_1+\tau(a)\right).
	\end{equation*}
	By \eqref{rsmodD}, this element is congruent modulo $b^q$ to an element of $rD_q$.
	Moreover, since its residue modulo $b$ is $r\pi(a)$, we see from \eqref{eqDecomDq} that $sA'+c_1+\tau(a)$ must
	belong to $rD_{q-1}$ modulo $b^{q-1}$. Hence,
\begin{equation}\label{eqrDqm1taua}
sD_{q-1}+c_1+\tau(a)
\subseteq
rD_{q-1}
\pmod{b^{q-1}}. 
\end{equation}
	Since $r$ and $s$ are both coprime to $b$, both sides of \eqref{eqrDqm1taua} have exactly $(\#D)^{q-1}$
	residue classes modulo $b^{q-1}$. 
So we have equality 
\begin{equation}\label{eqrDqm1c1a}
sD_{q-1}+c_1+\tau(a)
\equiv
rD_{q-1}
\pmod{b^{q-1}}.
\end{equation}
This holds for every $a\in D$.

Applying \eqref{eqrDqm1c1a} to  two digits $a,a'\in D$, we see that 
	\[
	sD_{q-1}+c_1+\tau(a)
	\equiv
	sD_{q-1}+c_1+\tau(a')
	\pmod{b^{q-1}}.
	\]
	Equivalently, let $F$ be the image of $sD_{q-1}+c_1+\tau(a')$ in
	$\Z/b^{q-1}\Z$. Then
	\begin{equation}\label{eqEq1E}
F+\tau(a)-\tau(a')\equiv F\pmod{b^{q-1}}.
	\end{equation}
Moreover, again since $s$ is coprime to $b$, we have $
	\#F=\#D_{q-1}=(\#D)^{q-1}$. On the other hand, note that
	\[
	\tau(a)-\tau(a')
	=
	\frac{s(a-a')-r(\pi(a)-\pi(a'))}{b}.
	\]
	Since $
	a,a',\pi(a),\pi(a')\in D\subset\{0,1,\ldots,b-1\}$,
	we have
	\begin{equation}\label{eqtauaad}
		|\tau(a)-\tau(a')|\leq \frac{|s|(b-1)+|r|(b-1)}{b}<|s|+|r|.
	\end{equation}
In view of \eqref{eqEq1E}, \eqref{eqtauaad} and the assumption that $	(|r|+|s|)(\#D)^{q-1}<b^{q-1}$, it follows from 
 Lemma~\ref{lem:bounded_translation} (in which we take $F$ as above and $T=|r|+|s|$) that $\tau(a)-\tau(a')=0$ for any $a,a'\in D$.
		
Therefore, $\tau(\cdot)$ is constant on $D$. Write the common value as $\tau_0$.
	Then for every $a\in D$, we have $
	sa+c_0-r\pi(a)=b\tau_0$,
	or equivalently, $sa+c_0-b\tau_0=r\pi(a)$.
	Since $\pi:D\to D$ is a bijection, we obtain an equality of subsets of $\Z$:
	\[
	sD+c_0-b\tau_0=rD. 
	\]
	Taking the diameters on both sides,  we get
	$|s|\,{\rm diam}D=|r|\,{\rm diam}D$.
	Since ${\rm diam}D>0$ as $\#D\geq 2$, this forces $|s|=|r|$, contradicting the assumption that $|s|\neq |r|$.
	This proves the lemma.
\end{proof}

With Lemma~\ref{lemrDqsDqc} in hand,  we are now ready to give the proof of Lemma~\ref{lem:Mqrsinq}.  

\begin{proof}[Proof of Lemma~\ref{lem:Mqrsinq}]
Fix $c\in\Z$. For each $v\in D_q$, the congruence equation $ru-sv\equiv c\pmod{b^q}$ has a unique solution $u\in\Z/b^q\Z$, because $\gcd(r,b)=1$. Hence it has at	most one solution $u\in D_q$. Therefore,
\begin{equation}\label{eqUVdQ2Surv}
\#\left\{(u,v)\in D_q^2:ru-sv\equiv c\pmod{b^q}\right\}\leq\#D_q=(\#D)^q.
\end{equation}
This proves that $M_{r,s}(q)\leq (\#D)^q$ for every $q\in\N$.

To prove the lemma, we argue by contradiction. 
Suppose the conclusion fails. Then there exists $q\in\N$ satisfying $
(|r|+|s|)(\#D)^{q-1}<b^{q-1}$ such that  $M_{r,s}(q)=(\#D)^q.$ So equality holds in \eqref{eqUVdQ2Surv} for some $c\in \Z$. Thus for every $v\in D_q$ there exists
	$u\in D_q$ such that
	$ru-sv\equiv c\pmod{b^q}$,
	which implies that
$sD_q+c\subseteq rD_q\pmod{b^q}$.
	Since $\gcd(rs,b)=1$, $r$ and $s$ are both coprime to $b$. It follows that  $sD_q+c$ and $rD_q$ both have exactly $\#D_q=(\#D)^q$
	residue classes modulo $b^q$. Thus we have equality $sD_q+c\equiv rD_q\pmod{b^q}$. However, this contradicts Lemma~\ref{lemrDqsDqc}. We complete the proof. 
\end{proof}

\section{Proofs of Theorem \ref{thmdimensiondrop} and Corollary~\ref{CorChar}}\label{S4}

We first prove Theorem~\ref{thmdimensiondrop} using Lemmas~\ref{lem:block-counting} and~\ref{lem:Mqrsinq}.

\begin{proof}[Proof of Theorem~\ref{thmdimensiondrop}]
We first treat the case $\ell=0$. Then $\gamma=\frac{s}{r}$, where $r\in\N$, $s\in\Z $ such that  $|s|\neq r$ and $\gcd(s,r)=\gcd(sr,b)=1$. The case $\ell\neq 0$ will be reduced to this case at the end of the proof. 

To ease notation, write $K=K_{b,D}$. 
For $m\in\N$ and $A \in D_{m}$, let $$I_{m,A}:= \left[\frac{A}{b^{m}}, \frac{A+1}{b^{m}}\right].$$ Then
	$$K\subset \bigcup_{A \in D_{m} } I_{m,A}.$$
Set $$E_{m}(\gamma,\alpha)=\{A \in D_{m} \colon I_{m,A} \cap (\gamma K+\alpha)\cap K\neq \emptyset\}.$$
It is clear that 
\begin{equation}\label{eqGKaKupp}
(\gamma K+\alpha)\cap K\subset \bigcup_{A\in E_m(\gamma, \alpha)}I_{m,A}.
\end{equation}
Below we estimate the cardinality of $E_m(\gamma,\alpha)$. 

Let $A \in E_{m}(\gamma,\alpha)$. Then there exists $x \in I_{m,A} \cap (\gamma K+\alpha) \cap K$. Since $\frac{x-\alpha}{\gamma}\in K$, we have 
	$$\frac{x-\alpha}{\gamma} \in I_{m,B} = \left[\frac{B}{b^{m}}, \frac{B+1}{b^{m}}\right] $$
	for some $B \in D_{m}$. Therefore,
	\[\left|x-\frac{A}{b^m}\right|\leq b^{-m}, \qquad \left|x-\left(\frac{\gamma B}{b^m}+\alpha\right)\right|\leq |\gamma| b^{-m}.\]
By the triangle inequality and using $\gamma=\frac{s}{r}$, we get
\begin{equation}\label{rAsBineq}
|rA-sB-\alpha rb^m|\leq |s|+r.
\end{equation}
That is,
	$$\alpha rb^m-(|s|+r)\leq rA-sB\leq \alpha rb^m+(|s|+r).$$
Let 
\[H=\{h\in \Z: \alpha rb^m-(|s|+r)\leq h\leq \alpha rb^m+(|s|+r)\}.\]
Let $q\in \N$ be such that 
\[q>\frac{\log(|s|+r)}{\log(b/(\#D))}+1.\]
Equivalently, $(|s|+r)(\#D)^{q-1}<b^{q-1}$.
	Then by Lemmas~\ref{lem:block-counting} and \ref{lem:Mqrsinq}, we have
	\begin{align}
		\# E_{m}(\gamma,\alpha) &\leq \#\{A\in D_m: \exists B\in D_m \text{ such that } \eqref{rAsBineq} \text{ holds}\}\nonumber\\&\leq  \sum_{h\in H}\#\{(A,B) \in D_{m}^{2} \colon rA-sB=h\}\nonumber\\
		&\leq (\#H) (\# D)^{2q} M_{r,s}(q)^{m/q} \nonumber\\
		&\leq (2r+2|s|+1)(\# D)^{2q} ((\# D)^{q}-1)^{m/q}.\label{eqBddEmag}
	\end{align}
Now it follows from \eqref{eqGKaKupp} and \eqref{eqBddEmag} that  
	$$\overline{\dim}_{\rm M} ((\gamma K+\alpha)\cap K) \leq \limsup_{m \to \infty} \frac{\log \# E_{m}(\gamma,\alpha)}{\log b^{m}} \leq \frac{\log ((\# D)^{q}-1) }{q\log b}.$$
This proves the theorem for the case that $\ell=0$. 

Now consider the case that  $\gamma=\frac{s}{r}b^{\ell}$ with $\ell\in\Z, \ell\neq 0$. Assume $r,s$ satisfy the same assumption as above. We further assume $\ell>0$; the other case that $\ell<0$ can be treated similarly.
Rewrite \eqref{eqKbDiden} as $bK=K+D$.
Iterating this gives $b^{\ell}K=K+D_{\ell}$.
Hence,
\begin{align*}
	(\gamma K+\alpha) \cap K&=\left(\frac{s}{r}b^{\ell}K+\alpha\right)\cap K\\
	&= \left(\frac{s}{r}K+\frac{s}{r}D_{\ell}+\alpha\right)\cap K\\
	&=\bigcup_{\alpha'\in \frac{s}{r}D_{\ell}+\alpha}\left(\frac{s}{r}K+\alpha'\right)\cap K.
\end{align*}
This is a finite union. 
Then the conclusion follows from the above result for  the case $\ell=0$. This completes the proof. 
\end{proof}

Finally, we give the proof of Corollary~\ref{CorChar}.

\begin{proof}[Proof of Corollary~\ref{CorChar}]
The ``only if'' part follows from the same arguments as in the proofs of Corollary~\ref{CorEGamma} and Remark~\ref{remps}(ii).
	To see the ``if'' part, let $\gamma\notin\{\pm b^{k}: k\in \Z\}$. The case $\gamma=0$ is trivial.  Assume $\gamma\neq 0$. Then it follows from Corollary~\ref{CorEGamma} and Remark~\ref{remps}(ii) when $\gamma\in\Q$, and  \cite[Corollary 6.4]{Shmerkin19} when $\gamma\not\in\Q$ that \eqref{eqstrinq} holds for every $\alpha\in\R$. This proves the corollary.
\end{proof}

{\noindent \bf  Acknowledgements}. 
BL was supported by National Key R\&D Program of
China (No. 2024YFA1013700), NSFC 12271176 and Guangdong Natural Science Foundation 2024A1515010946. R. Li was supported by the NSFC 12401006.  Y. F. Wu  (corresponding author) was supported by the NSFC 12301110.

\end{document}